\algnewcommand{\linecomment}[1]{\Statex \hskip\ALG@thistlm \textcolor{blue}{/*#1*/}}
\setlist[itemize]{leftmargin=7.5mm}
\newtheorem{theorem}{Theorem}[section]
\newtheorem*{theorem*}{Main Theorem}
\newtheorem{proposition}[theorem]{Proposition}
\newtheorem{corollary}[theorem]{Corollary}
\newtheorem{lemma}[theorem]{Lemma}
\theoremstyle{definition}
\newtheorem{definition}[theorem]{Definition}
\newtheorem{example}[theorem]{Example}
\newtheorem{heuristic}[theorem]{Heuristic}
\newtheorem{remark}[theorem]{Remark}
\newcommand{\orcid}[1]{\href{https://orcid.org/#1}{\textcolor[HTML]{A6CE39}{\aiOrcid}}}
\newcommand{\C}{\mathcal{C}}
\newcommand{\A}{\mathcal{A}}
\renewcommand{\L}{\mathcal{L}}
\newcommand{\Am}{\mathsf{A}}
\newcommand{\E}{\mathcal{E}}
\renewcommand{\O}{\mathcal{O}}
\renewcommand{\P}{\mathcal{P}}
\newcommand{\Jac}{\text{Jac}}
\newcommand{\FF}{\mathbb{F}}
\newcommand{\Fq}{\mathbb{F}_{q}}
\newcommand{\Zq}{\mathbb{Z}_{q}}
\newcommand{\Fp}{\mathbb{F}_{p}}
\newcommand{\QQ}{\mathbb{Q}}
\newcommand{\ZZ}{\mathbb{Z}}
\let\Z\ZZ
\let\F\FF
\newcommand{\NN}{\mathbb{N}}
\newcommand{\CC}{\mathbb{C}}
\newcommand{\PP}{\mathbb{P}}
\newcommand{\End}{\mathop{End}}
\newcommand{\Spec}{\mathrm{Spec}}
\renewcommand{\char}{\textrm{char}}
\newcommand{\disc}{\textrm{disc}}
\newcommand{\Otilde}{\tilde{O}}
\newcommand{\iso}{\ensuremath{\simeq}}      
\DeclareMathOperator{\Ker}{Ker}
\newcommand{\const}{\mathfrak{O}}
\newcommand{\corresp}{\overline{{\Phi}_{\ell}}}
\newcommand{\correspbis}{\overline{{\Phi}_{2}}}
\newcommand{\hodge}{\mathfrak{h}}
\let\tens\otimes
\newcounter{savealgorithm}
\providecommand{\stackTag}[1]{
\href{https://stacks.math.columbia.edu/tag/#1}{Tag~#1}}
\providecommand{\stackcite}[1]{
\cite[\stackTag{#1}]{stacks-project}}
\definecolor{lightgreen}{rgb}{0.86, 0.93, 0.78}
\definecolor{bordergreen}{rgb}{0.55, 0.76, 0.74}
\definecolor{lightblue}{rgb}{0.70, 0.90, 0.99}
\definecolor{borderblue}{rgb}{0.01, 0.66, 0.96}
\definecolor{lightamber}{rgb}{1, 0.93, 0.70}
\definecolor{borderamber}{rgb}{1, 0.76, 0.03}
\definecolor{lightcolor4}{rgb}{ 0.93, 0.70, 1}
\definecolor{bordercolor4}{rgb}{0.76, 0.03, 1}
\definecolor{lightcolor5}{rgb}{0.78,0.86,0.93}
\definecolor{bordercolor5}{rgb}{0.74,0.55,0.76}
\title{Computing modular polynomials by deformation}
\author{Sabrina Kunzweiler, Damien Robert}
\address{Inria Bordeaux, Institut de Math\'ematiques de Bordeaux}
\begin{document}
	
	\begin{abstract}
		We present an unconditional CRT algorithm to compute the modular polynomial $\Phi_\ell(X,Y)$ in quasi-linear time. The main ingredients of our algorithm are: the embedding of $\ell$-isogenies in smooth-degree isogenies in higher dimension, and the computation of $m$-th order deformations of isogenies. We provide a proof-of-concept implementation of a heuristic version of the algorithm demonstrating the practicality of our approach. 
		
		Our algorithm can also be used to compute the reduction of $\Phi_{\ell}$
		modulo~$p$ in quasi-linear time (with respect to $\ell$) $\Otilde(\ell^2
		(\log p + \log \ell)^{\const})$.
	\end{abstract}
	
	\maketitle
	
	\section{Introduction}

The modular polynomial $\Phi_{\ell}(X,Y) \in \Z[X,Y]$ parametrizes
$\ell$-isogenies between elliptic curves. It is a fundamental object for
building isogenies and exploring isogeny graphs, and is used in many
applications, notably in the Schoof–Elkies–Atkin (SEA) point counting algorithm \cite{schoof1995counting, elkies1991explicit, atkin1988number}.
We will assume $\ell$ to be prime for simplicity, the extension to the
general case is not hard.
It is well known that $\Phi_{\ell}$ has degree $\ell+1$ in each
variable with height $O(\ell \log \ell)$, so the modular polynomial has size
$O(\ell^3 \log \ell)$. More explicitly, it is shown in \cite{broker2010explicit} that 
\begin{equation} \label{eq:bound}
	\log |a_{i,j}| \leq 6 \ell \log(\ell) + 16 \ell + \min{(2\ell,14\sqrt{\ell}{\log{\ell}})}, 
\end{equation}
where $\Phi_\ell = \sum_{i,j} a_{i,j}X^iY^j$. Refined bounds can be found in \cite{breuer2023explicit}.

\subsection{Algorithms for computing modular polynomials}

The first quasi-linear algorithm to compute $\Phi_{\ell}$ is due to Enge in
\cite{enge2009computing} and uses analytic methods. Namely, it uses an
evaluation-interpolation approach: fix $\ell+1$ nice period matrices $\tau_i$ in
the complex upper half plane, and evaluate
$\Phi_{\ell}(j(\tau_i), Y) = \prod_{j=1}^{\ell+1} (Y - j(\tau_{i,j})) =
Y^{\ell+1}+\sum_{j=0}^{\ell} c_j(j(\tau_i)) Y^j$,
where the $\tau_{i,j}$ are period matrices of the $\ell+1$ elliptic curves
isogenous to $E_{\tau_i}$.
Each coefficient $c_j$, seen as polynomial in the variable $X$, can then
be recovered by interpolation.

Since the modular polynomial has large height, the analytic method needs to
work with large precision.
To achieve quasi-linearity, the $j$-invariants thus need to be evaluated
in quasi-linear time in the precision. Due to the use of large precision floating point arithmetic, Enge's algorithm is only heuristic: the assumption is that the loss of precision in the floating point arithmetic does not explode.
This has been made rigorous in recent work by Kieffer \cite{kieffer2022certified,kieffer2020evaluating}.
In \cite{kieffer2020evaluating}, Kieffer focuses on the rigorous evaluation of dimension 2 theta
functions; but the same arguments work (and are easier) to compute the
modular polynomials in dimension 1.

In practice, the analytic algorithm works well; due to the quasi-linear
complexity,
the blocking factor to compute large modular polynomials are their size,
hence the available memory, rather than the computing power.
In \cite{broker2012modular}, Bröker, Lauter and Sutherland introduce a quasi-linear algorithm based on the Chinese remainder theorem (CRT) to compute $\Phi_{\ell}$. Besides being purely algebraic, a great advantage is that thanks to the explicit CRT algorithm \cite{bernstein2007modular}, the reduced modular polynomial $\Phi_{\ell}(X,Y) \mod m$ ($m$ a large integer) can be computed by using a memory of only $O(\ell^2 \log (m \ell))$ rather than $O(\ell^3 \log \ell)$.

In the CRT algorithm, the modular polynomial $\Phi_{\ell}(X,Y) \mod p_i$ is
computed for many ($O(\ell)$) small primes $p_i$ (with $\log p_i \approx \log
\ell$). The Chinese remainder theorem is then used to reconstruct $\Phi_{\ell}(X,Y) \mod \prod
p_i$.
To obtain a quasi-linear algorithm, $\Phi_{\ell}(X,Y) \mod p_i$  needs to
be computed in $O(\ell^2 \log^{O(1)} p_i)$.
As in the analytic method, this is done by an evaluation-interpolation
approach, but in a careful way to achieve the desired complexity.

Fix a small prime $p=p_i\neq \ell$.
The naive evaluation-interpolation approach would be to select $\ell+1$ elliptic curves $E_i/\F_p$,
and compute all $\ell+1$-isogenous elliptic curves $E_{i,j}/\F_p$.
But given an elliptic curve $E_i/\F_p$, it is already not obvious how to
compute the $\ell+1$-isogenies efficiently: factoring the division
polynomial (of degree $\ell^2$) is too expensive.
A solution is to require that $E_i[\ell]$ has rational torsion; we can then
efficiently sample a basis of it, generate all kernels, and apply the recent square-root Vélu algorithm \cite{bernstein2020faster}, built on V\'elu's formula \cite{velu1971isogenies}, to compute the isogenous curves.
The square-root Vélu algorithm costs $\tilde{O}(\sqrt{\ell})$ arithmetic operations (as opposed to a cost of $O(\ell)$ for the evaluation of the classical V\'elu's formula), so computing
$\Phi_{\ell}(j(E_i), Y)$ in this way already costs $\tilde{O}(\ell \sqrt{\ell})$ arithmetic operations.
Applying the same algorithm to each $E_i$ would result in $\tilde{O}(\ell^2\sqrt{\ell})$  arithmetic operations and is therefore too expensive for the purpose of constructing a quasi-linear algorithm.

Instead, the solution proposed in \cite{broker2012modular} is to use isogeny volcanoes 
to find suitable $\ell$-isogenous curves faster than in $O(\ell)$.
They use the Hilbert class polynomial of a carefully crafted imaginary
quadratic order to ensure that the isogeny volcano has the desired
property.
Their algorithm computes $\Phi_{\ell}$ in $O(\ell^3 \log^{3} \ell \log \log
\ell)$  which is slightly faster than the analytic method, and as mentioned above requires less memory to compute $\Phi_{\ell}$ mod $m$.
However, the complexity proof relies on the Generalised Riemann Hypothesis (this is required to be sure
that there exist small generators of the class group), hence is not unconditional.

\subsection{New algorithmic tools}

The field of isogeny-based cryptography radically changed following the
break of SIDH \cite{castryck2023efficient,eurocrypt-2023-32955,EC23:Robert}. It was soon clear that the tools
(Kani's lemma, Zahrin's trick) used for the break could also be used to
achieve a very efficient representation of isogenies \cite{robert2022evaluating} by
embedding them into higher dimensional isogenies,
which in turn lead to new isogeny-based cryptosystems (for
instance SQISignHD \cite{sqisignhd} and FESTA \cite{festa}).

These new algorithmic tools are not restricted to isogeny-based
cryptography; in \cite{robert2022applications} the second author uses them to outline new
algorithms to compute the endomorphism ring and canonical lifts of ordinary
abelian varieties in polynomial time.
A sketch of a quasi-linear algorithm to compute modular polynomials is
described as well.

We summarize the efficient representation of isogenies of
\cite{robert2022evaluating}, which we will call an {\em HD representation} (for
higher dimensional representation),
as follows:
\begin{theorem}
  \label{thm:rep}
  Let $f: E_1 \to E_2$ be an $\ell$-isogeny between elliptic curves
  over a finite field $\Fq$.
  There exists an HD representation $F$ of $f$ which 
  can be used to  evaluate $f(P)$ for $P \in E_1(\Fq)$
  in time $\Otilde(\log^{\const} \ell)$ arithmetic operations in $\Fq$, for
  some constant $\const$ (not depending on $q$ or $\ell$)\footnote{In
    practice, we can achieve $\const=12$ using \cite[Proposition~2.9]{robert2022applications}.}.

  If $E_1$ has full rational $2^n$-torsion with $2^n > \ell$, there exists an HD-representation for which the evaluation takes $O(\log \ell)$ arithmetic  operations. 
\end{theorem}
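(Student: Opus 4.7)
The plan is to realise $f$ as a block of a higher-dimensional isogeny whose degree is a power of~$2$, so that it can be decomposed into a chain of $(2,\dots,2)$-isogenies and evaluated step by step.

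First, I would pick the smallest $n$ with $2^n>\ell$, set $d=2^n-\ell$, and express $d$ as a sum of squares. Lagrange guarantees $d=a^2+b^2+c^2+e^2$, which via the standard Zarhin/quaternion construction produces an endomorphism $\alpha$ of $E_1^{4}$ with $\alpha\tilde\alpha=d$. Applying $f$ diagonally gives an isogeny $f^{4}:E_1^{4}\to E_2^{4}$ of degree $\ell^{4}$. Kani's lemma (in the form used in \cite{EC23:Robert}) then implies that the block matrix
\[
F=\begin{pmatrix} \alpha & \widetilde{f^{4}} \\ -f^{4} & \tilde\alpha \end{pmatrix}:
(E_1\times E_2)^{4}\longrightarrow (E_1\times E_2)^{4}
\]
is an $(\ell+d)$-isogeny, i.e.\ a $(2^n,\dots,2^n)$-isogeny on an abelian variety of dimension~$8$. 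The original $\ell$-isogeny is recovered as a component of $F$ by embedding $P\in E_1$ as $(P,0,\dots,0)$ and reading off the image.

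Next I would decompose $F$ into a chain of $n=O(\log\ell)$ consecutive $(2,\dots,2)$-isogenies between $8$-dimensional abelian varieties. The HD representation is then precisely this chain together with the embedding data of $E_1$ and the projection data to $E_2$. To evaluate $f(P)$, one pushes the lifted point through each of the $n$ steps; in theta (or Mumford-style) coordinates each $(2,\dots,2)$-isogeny in fixed dimension is computed by a bounded number of polynomial evaluations in the coordinates of~$P$, so each step costs $\Otilde(\log^{\const}\ell)$ operations once we account for the algebraic extensions of $\Fq$ that may be needed to sample a symplectic basis of the $2$-kernel of each step. Multiplying by the $O(\log\ell)$ number of steps yields the announced $\Otilde(\log^{\const}\ell)$ bound.

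The main obstacle is precisely this extension-field issue: in the general case, the kernels of the $(2,\dots,2)$-isogenies along the chain may only be defined over a small extension of $\Fq$, forcing arithmetic in $\Fq[\zeta]$ and contributing the extra logarithmic factors hidden in $\const$. In the second part of the statement, however, the full rational $2^n$-torsion on $E_1$ propagates through the chain (pulled back via the first component), so the kernels are already $\Fq$-rational; each of the $n$ steps then costs $O(1)$ operations in $\Fq$, and one obtains the sharper $O(\log\ell)$ estimate. I would conclude by noting that the construction is completely explicit and that the constant $\const=12$ in the footnote comes from the careful bookkeeping carried out in \cite[Proposition~2.9]{robert2022applications}.
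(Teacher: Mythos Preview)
Your construction of the $8$-dimensional endomorphism $F$ via Zarhin's trick and Kani's lemma is the right mechanism, and for the second part of the theorem (rational $2^n$-torsion) your argument is essentially what the paper intends: with $N=2^n$ and $E_1[2^n]\subset E_1(\Fq)$, the kernel of $F$ is $\Fq$-rational, the chain of $(2,\dots,2)$-isogenies is computed over $\Fq$, and each of the $O(\log\ell)$ steps costs $O(1)$ operations.

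For the first part, however, there is a genuine gap. You take $N=2^n$ unconditionally and then say the intermediate $(2,\dots,2)$-kernels ``may only be defined over a small extension of $\Fq$''. That is the crux, and it is not true in the sense you need. To walk the chain one must carry generators of $\ker F$, hence one needs points of $E_1[2^n]$ (and $E_2[2^n]$). The field of definition of $E_1[2^n]$ is $\F_{q^d}$ with $d$ the order of Frobenius in $\GL_2(\Z/2^n\Z)$; this $d$ can be of size $\Theta(2^n)=\Theta(\ell)$, not $\mathrm{polylog}(\ell)$. Arithmetic in such an extension costs $\tilde O(\ell)$ per operation, so your total becomes $\tilde O(\ell\log\ell)$ rather than $\tilde O(\log^{\const}\ell)$.

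The paper avoids this by \emph{not} taking $N=2^n$ in the general case. Instead it chooses $N>\ell$ \emph{powersmooth} (each prime-power factor bounded by $O(\log\ell)$). Then every Sylow piece of $E_1[N]$ lives over an extension of $\Fq$ of degree $\mathrm{polylog}(\ell)$, the $N$-torsion is ``accessible'', and the $N$-isogeny $F$ decomposes into $O(\log\ell)$ small-degree pieces each computable in $\mathrm{polylog}(\ell)$ operations. This is exactly where the constant $\const$ (and the $\const=12$ of the footnote) comes from; the choice $N=2^n$ is reserved for the special situation where the $2^n$-torsion is already rational, which is the content of the second assertion.
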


  An HD-representation with $\const = 1$, as in the last part of the theorem, is called {\em special HD representation}.
  The key point of the SIDH attacks is that an elliptic curve isogeny $f$ can be embedded into
  a higher dimensional $N$-isogeny as long as we know the evaluation of
  $f$ on a basis of $E_1[N]$, $N>\ell$.
  The first representation in Theorem \ref{thm:rep} follows by taking $N$ powersmooth, and the
  second (which more generally could be used as long as the $2^n$-torsion lies in a small extension of $\Fq$) by taking $N=2^n$.
  Of course, we also have a special HD representation if
  the $3^n>\ell$-torsion is rational and so on.
  We refer to \cite{robert2022evaluating,robert2022applications} for more
  details.

\subsection{Our contributions}

The goal of the current article is to give a fully fledged and rigorous
analysis of the algorithm to compute modular polynomials sketched in \cite[Section 6]{robert2022applications}.
Notably, we prove in \cref{sec:modular} the following theorem, which matches the complexity of~\cite{broker2012modular}.

\begin{theorem} \label{thm:main}
  There exists an unconditional CRT algorithm which computes the modular
  polynomial $\Phi_{\ell}(X,Y)$ in quasi-linear time $O(\ell^3 \log^3 \ell
  \log \log \ell)$.
\end{theorem}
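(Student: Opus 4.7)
The plan is to follow the CRT paradigm of \cite{broker2012modular}: fix a family of $O(\ell)$ auxiliary primes $p_i$ of size $\Theta(\ell \log \ell)$, compute $\Phi_\ell(X,Y) \bmod p_i$ in time $\Otilde(\ell^2 \log^{\const} p_i)$ for each, and reconstruct $\Phi_\ell$ via the explicit CRT algorithm of \cite{bernstein2007modular}. By the height bound \eqref{eq:bound}, these contributions assemble to the announced complexity as soon as each local computation fits the stated budget, so the remainder of the argument is local at one prime $p := p_i$.

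I would select $p$ unconditionally among primes satisfying $p \equiv 1 \pmod{2^n}$ for some $2^n > \ell$, for which an ordinary $E_0/\F_p$ with full rational $2^n$-torsion exists; the special HD representation of \cref{thm:rep} then applies, and any $\ell$-isogeny out of $E_0$ can be evaluated in $\Otilde(\log \ell)$ arithmetic operations. A basis $(P,Q)$ of $E_0[\ell]$ is obtained in a controlled-degree extension, and the $\ell+1$ cyclic kernels $K_k$ are enumerated as $\langle P + kQ \rangle$ and $\langle Q \rangle$. For each $K_k$ the HD representation of $f_k : E_0 \to E_0/K_k$ yields $j(E_0/K_k)$ in $\Otilde(\log \ell)$ operations, so that $\Phi_\ell(j(E_0), Y) = \prod_k (Y - j(E_0/K_k))$ is obtained in $\Otilde(\ell \log \ell)$ operations.

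To extend this into a computation of the full bivariate polynomial $\Phi_\ell(X,Y) \bmod p$ from the single base curve $E_0$ (rather than iterating over $\ell+2$ distinct curves and interpolating in $X$), I use that $\deg_X \Phi_\ell = \ell+1$, so the Taylor expansion of $\Phi_\ell$ in $X$ at $j(E_0)$ up to order $\ell+1$ already determines $\Phi_\ell \bmod p$. I lift $E_0$ to a universal deformation $\mathcal{E}$ over the Artinian ring $\F_p[t]/(t^{\ell+2})$, lift each kernel $K_k$ uniquely (possible because $\ell \neq p$ makes $K_k$ \'etale), and compute the Taylor series $j(\mathcal{E}/K_k) \in \F_p[t]/(t^{\ell+2})$ by deforming the HD-representation of $f_k$. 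Taking products gives $\Phi_\ell(j(E_0)+t, Y) \bmod t^{\ell+2}$, whence $\Phi_\ell(X,Y) \bmod p$ by a change of variable.

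The hard part, which constitutes the technical core of the paper, is bounding the cost of these $m$-th order deformations for $m = \ell+2$. The HD representation of \cref{thm:rep} expresses $f_k$ as a chain of $O(\log \ell)$ isogenies of bounded smooth degree between abelian varieties of dimension at most $8$, and I would show that each elementary factor admits an $m$-th order deformation computable in $\Otilde(m)$ operations in $\F_p[t]/(t^m)$ via theta-coordinate arithmetic and fast power-series multiplication. Composing along the chain yields $j(\mathcal{E}/K_k)$ in $\Otilde(\ell \log \ell)$ operations, the $\ell+1$ kernels give $\Phi_\ell \bmod p$ in $\Otilde(\ell^2 \log \ell)$ operations per prime, and summing over the $O(\ell)$ CRT primes with $\log p = \Theta(\log \ell)$ recovers the claimed quasi-linear bound. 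Unconditionality stems from the fact that the only density input is Dirichlet's theorem on primes in arithmetic progressions, sidestepping the GRH-dependent class-group estimates of \cite{broker2012modular}.
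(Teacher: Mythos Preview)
Your overall architecture---CRT over small primes, a single base curve per prime, deformation of the $\ell+1$ HD-represented $\ell$-isogenies to order $\ell+1$, reconstruction of $\Phi_\ell \bmod p$ from the resulting Taylor expansion---is exactly the paper's. The gaps are in the concrete choices of prime and curve.

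First, the prime condition $p \equiv 1 \pmod{2^n}$ with $2^n > \ell$ says nothing about $\ell$, so your claim that a basis of $E_0[\ell]$ lies in a ``controlled-degree extension'' is unjustified: for a generic ordinary $E_0/\F_p$ the splitting field of $E_0[\ell]$ can have degree up to $\ell^2-1$, which wrecks the complexity of the initialisation step (enumerating kernels and evaluating each $f_k$ on $E_0[2^n]$). Second, even granting the congruence, \emph{finding} an ordinary curve over $\F_p$ with full rational $2^n$-torsion is itself a nontrivial construction problem; doing it via CM reintroduces exactly the GRH-dependent class-group input you are trying to avoid. The paper sidesteps both issues by taking $p$ in $\P^*_\ell = \{p : 2^n \ell \mid p+1,\ n = \lceil \log_2 \ell \rceil\}$ and working with the \emph{fixed supersingular} curve $E_0 : y^2 = x^3 + 6x^2 + x$ over $\F_{p^2}$: then $E_0(\F_{p^2}) \cong (\Z/(p+1)\Z)^2$, so both $E_0[\ell]$ and $E_0[2^n]$ are rational over $\F_{p^2}$ for free, and the known endomorphism $\iota$ (or, unconditionally, the $4\times 4$ quaternion matrix on $E_0^4$) supplies the auxiliary isogeny of degree $2^n-\ell$ needed in Kani's construction. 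Finally, Dirichlet's theorem gives only the asymptotic density of such primes, not a bound on where the first $O(\ell)$ of them lie; the paper invokes Linnik's theorem to guarantee that the $m$-th prime in $\P^*_\ell$ is at most $(2^n\ell)^L m \log^2 m$, hence $\log p = O(\log \ell)$ unconditionally. (Relatedly, your ``size $\Theta(\ell \log \ell)$'' is too small: even with your weaker congruence the density is $\approx 1/\ell$, so there are only $O(1)$ such primes below $\ell \log \ell$.)
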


More precisely, we present a rigorous algorithm, along with a faster but
heuristic version, which is the one we implemented.  We remark that this can also be used to compute the reduction modulo~$m$ of $\Phi_{\ell}$ in
quasi-linear space $O(\ell^2 \log (m \ell))$.
To simplify the exposition, we will always assume that $\ell>2$ and the base characteristic~$p>2$, too.

We briefly explain the main ideas behind the algorithms.
The key tool is the following theorem proved in
\cref{sec:deformations-applications}, and
stated here in an informal way:

\begin{theorem} \label{thm:main2}
  Given an HD representation of an $\ell$-isogeny $f: E_1 \to E_2$ over
  a finite field $\Fq=\F_{p^d}$, with $\ell$ prime to the characteristic~$p$,
  and given an $m$-th order deformation of $E_1$ to an elliptic curve
  $\E_1/R$ with $R=\Fq[\epsilon]/(\epsilon^{m+1})$,
  then we can efficiently compute the deformation of $f$ to $\E_1$, that is, $\E_2$ an elliptic curve over $R$ and an isogeny
  $\tilde{f}: \E_1 \to \E_2$ with $\tilde{f} \equiv f \pmod{\epsilon}$
  in $\Otilde(\log^{\const} \ell)$ arithmetic operations in $R$.

  If we have a special HD representation of $f$,
  then we can compute $\tilde{f}$ in $O(\log \ell)$ arithmetic
  operations in $R$.
\end{theorem}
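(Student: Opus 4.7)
The plan is to use the HD-representation of $f$ to present the associated higher-dimensional isogeny as a short chain of small-degree isogenies with étale kernels, and then deform each link in the chain by étale lifting. By Theorem~\ref{thm:rep} and the discussion following it, an HD-representation embeds $f$ into a smooth-degree isogeny $F : A_1 \to A_2$ between higher-dimensional abelian varieties built from copies of $E_1$ and $E_2$ via the Kani construction. In the special case $N = 2^n$ with $n = O(\log \ell)$, $F$ factors as a chain $F_n \circ \cdots \circ F_1$ where each $F_i$ has small (power of~$2$) degree; in the general case $N$ is powersmooth, and $F$ factors analogously into $O(\log^{\const} \ell)$ steps of bounded smooth degree. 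Since $N$ is coprime to $p$, every $F_i$ has étale kernel.

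Next, I would set up the inductive deformation. The given deformation $\E_1/R$ of $E_1$ canonically lifts the Kani product $A_1$ to a deformation $\mathcal{A}_1/R$, since the product construction and polarization data are functorial. Assuming a deformation $\mathcal{A}_i/R$ of $A_i$ is already in hand, the kernel $K_i \subset A_i$ of $F_i$ is étale over $\Fq$, and so lifts uniquely to a finite flat subgroup scheme $\tilde{K}_i \subset \mathcal{A}_i$ by étale rigidity. In practice this is a Hensel lift of the finitely many polynomial equations cutting out $K_i$, whose Jacobians are invertible precisely because $K_i$ is étale. Setting $\mathcal{A}_{i+1} := \mathcal{A}_i / \tilde{K}_i$ yields the deformed isogeny $\tilde{F}_i$, and composing gives $\tilde{F} : \mathcal{A}_1 \to \mathcal{A}_2$; the desired $\tilde{f} : \E_1 \to \E_2$ is extracted by projecting onto the appropriate factor, using the same embedding that realized $f$ inside $F$. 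The congruence $\tilde{f} \equiv f \pmod{\epsilon}$ is automatic, since every step lifts its reduction.

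The main obstacle will be executing each step of the chain in $\Otilde(1)$ arithmetic operations in $R$. The Hensel lift of $\tilde{K}_i$ reduces to lifting $O(1)$ torsion points of bounded-degree equations, which is done by Newton iteration in a constant number of $R$-operations. The more delicate part is the quotient computation, which requires an explicit isogeny formula (such as the theta-coordinate formulas underlying \cite{robert2022evaluating}) that behaves well under base change from $\Fq$ to $R$. Granting that these primitives extend functorially to $R$ — plausible since they are polynomial in nature — the total cost is the length of the chain times $\Otilde(1)$, giving $O(\log \ell)$ operations in $R$ in the special case and $\Otilde(\log^{\const} \ell)$ in the general case, matching the statement.
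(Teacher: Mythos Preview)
Your proposal has a genuine gap at the step ``the given deformation $\E_1/R$ of $E_1$ canonically lifts the Kani product $A_1$ to a deformation $\mathcal{A}_1/R$.'' The domain of the higher-dimensional isogeny $F$ is not built from copies of $E_1$ alone: in the dimension-$8$ construction of the paper it is $E_1^4 \times E_2^4$, and in the dimension-$2$ version it is $E \times E_{ab}$ where $E_{ab}$ is an $\ell$-isogenous curve. So to lift $A_1$ you already need a deformation of $E_2$, which is precisely the unknown you are trying to compute. If you pick an \emph{arbitrary} lift $\E_2'$ and run your chain, the étale lifting of each kernel works fine, but the resulting codomain $\mathcal{A}_2$ is generically \emph{not} a product of elliptic curves (equivalently, $\tilde{F}$ is not an endomorphism of $\E_1^4 \times \E_2'^4$), so there is no factor to project onto and no way to read off $\tilde{f}$.

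The paper's proof handles exactly this point: it treats the deformation parameter $\lambda$ of $\E_2$ as the unknown and solves for it by a Newton iteration, doubling the $\epsilon$-precision at each stage. The equation being solved is ``codomain of $\tilde{F}$ equals domain'' (tested via modular invariants such as theta constants, or via $\chi_{10}=0$ in the dimension-$2$ variant, cf.\ \cref{cor:chi10} and \cref{algo:lift_isogeny_diamond}). At each Newton step one deforms the chain $F = F_n \circ \cdots \circ F_1$ for two trial values of $\lambda$ and interpolates linearly to find the correct increment. Your chain-lifting machinery is exactly what is used \emph{inside} each Newton step, but by itself it does not determine $\E_2$; the outer Newton loop on $\lambda$ is the missing idea.
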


In other words: once we have an efficient representation of an isogeny, we
can efficiently deform it.
Deformation techniques are not new in computer algebra, see for instance
\cite{schost2003computing}. Here we exploit them in order to recover the modular polynomial
$\Phi_{\ell}$ modulo a prime~$p$:

\begin{corollary}
  \label{cor:main}
  Let $\Fq$ be a field with  $\char(\Fq) = p$ and $E_0/\Fq$ be an elliptic curve which has all its $\ell+1$
  $\ell$-isogenies rational.
  Given an HD representation of these isogenies,
  we can compute $\Phi_{\ell} \mod p$ in time
  $\Otilde(\ell^2 \log q)$.
  If we have a special HD representation of all isogenies,
  the cost is $O(\ell^2 \log^2 \ell)$ operations over $\Fq$.
\end{corollary}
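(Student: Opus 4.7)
The plan is to exploit the factorization
\[
\Phi_\ell(j(E_0),Y) \;=\; \prod_{i=1}^{\ell+1}(Y - j(E_i))
\]
in a \emph{deformed} form over the truncated polynomial ring $R = \Fq[\epsilon]/(\epsilon^{m+1})$ with $m = \ell+1$, so that a single fibered computation encodes enough Taylor data in the $X$-variable to reconstruct all of $\Phi_\ell(X,Y) \bmod p$. Concretely, I would first choose a lift $\E_1/R$ of $E_0$ with $j(\E_1) = j(E_0) + \epsilon$; this is possible through a standard one-parameter Weierstrass model when $j(E_0) \notin \{0, 1728\}$, and the exceptional $j$-values can be handled by a linear change of variable or a base extension. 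Then for each rational $\ell$-isogeny $f_i: E_0 \to E_i$ ($i = 1, \dots, \ell+1$), I invoke \cref{thm:main2} on its given HD representation to obtain the deformation $\tilde{f}_i: \E_1 \to \E_{2,i}$ over $R$ and extract $\tilde{\jmath}_i := j(\E_{2,i}) \in R$. Finally I compute $P(\epsilon,Y) := \prod_{i=1}^{\ell+1}(Y - \tilde{\jmath}_i) \in R[Y]$ via a subproduct tree and substitute $\epsilon \mapsto X - j(E_0)$ to read off $\Phi_\ell(X,Y) \bmod p$.

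The correctness reduces to the identity
\[
P(\epsilon, Y) \;=\; \Phi_\ell\bigl(j(\E_1), Y\bigr) \pmod{\epsilon^{m+1}}
\]
in $R[Y]$, i.e., that the $\tilde{\jmath}_i$ are precisely the $\ell+1$ roots of $\Phi_\ell(j(\E_1), Y)$. This follows from two observations: the kernels of the $\tilde{f}_i$'s reduce modulo $\epsilon$ to the $\ell+1$ distinct kernels of the $f_i$'s, hence are themselves distinct in $\E_1[\ell](R)$; and, because $\ell$ is coprime to $p$, the $\ell$-torsion group scheme $\E_1[\ell]$ is étale over $R$ and base-changes from $E_0[\ell]$, so no new rational $\ell$-isogenies appear upon deformation. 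Since $\Phi_\ell(X,Y)$ has degree $\le \ell+1$ in $X$ and $m = \ell+1$, the truncated Taylor data determine $\Phi_\ell \bmod p$ uniquely.

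For the complexity: the $\ell+1$ deformations cost $\Otilde(\log^{\const}\ell)$ operations in $R$ each (respectively $O(\log \ell)$ in the special HD case), summing to $\Otilde(\ell)$ operations in $R$; the subproduct-tree computation of the product is of the same order. Since $\dim_{\Fq} R = m+1 = \Theta(\ell)$, each $R$-operation costs $\Otilde(\ell)$ $\Fq$-operations via FFT-based truncated-series arithmetic, and each $\Fq$-operation costs $\Otilde(\log q)$ bit operations. Multiplying through gives the claimed $\Otilde(\ell^2 \log q)$ overall complexity, and a finer log-factor analysis in the special case yields the stated $O(\ell^2 \log^2 \ell)$ operations over $\Fq$. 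I expect the main technical obstacle to lie in this precise log-factor bookkeeping in the special case together with the handling of the exceptional $j$-invariants $0$ and $1728$, rather than in any deep conceptual hurdle: conditional on \cref{thm:main2}, the argument is essentially a single evaluate-and-substitute.
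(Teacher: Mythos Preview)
Your proposal is correct and follows essentially the same approach as the paper: deform $E_0$ to $\E_1$ with $j(\E_1)=j(E_0)+\epsilon$ over $R=\Fq[\epsilon]/(\epsilon^{\ell+2})$, lift each of the $\ell+1$ isogenies via \cref{thm:main2}, form the product $\prod_i(Y-\tilde{\jmath}_i)$ by a subproduct tree, and substitute $\epsilon\mapsto X-j(E_0)$. The paper's own proof of \cref{cor:main} is only a two-line sketch pointing to \cref{sec:modular-Fp}, where exactly this procedure is made explicit (see Algorithm~\ref{algo:modp} and the proof of \cref{thm:modular-polynomial-a}); your handling of the exceptional $j$-invariants and your cost accounting also mirror the paper's.
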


\begin{proof}
  The idea is to use the HD representation of the isogenies to deform them
  to $\Fq[\epsilon]/(\epsilon^{m+1})$. When the precision $m$ is high enough, we
  get enough information to reconstruct the full modular polynomial.
  We refer to \cref{sec:modular-Fp} for more details.
\end{proof}

We remark that if $q=p^d$ with $d$ small, then \cref{cor:main} is
quasi-linear in its output size $O(\ell^2 \log p)$.
To apply \cref{cor:main}, we need to find an elliptic curve over a small
extension of $\Fp$ on which we can efficiently compute the $\ell+1$
$\ell$-isogenies.

 The easiest case is when we can find an elliptic curve $E_0/\Fq$ with
    full rational $\ell$-torsion. In this case we compute a basis, compute
    all kernels, and apply Vélu's formula.
    Given a basis of $E_0[\ell]$, computing all the $\ell$-isogenies costs $O(\ell^2)$ arithmetic operations. \footnote{This is not the dominating step of the algorithm, hence there is no need to apply the asymptotically faster square-root V\'elu algorithm.}
    
	For instance, if $p=3 \pmod{4}$ and $\ell \mid p^2-1$, then $E: y^2=x^3+x$ is a supersingular curve over $\Fp$ and
    $E(\F_{p^4})=(\Z/(p^2-1)\Z)^2$, then we can efficiently sample
    a basis of the $\ell$-torsion in $O(\log p)$ operations in $\F_{p^4}$
    (in practice we can work with $E/\F_{p^2}$ or its quadratic twist
    according to whether $\ell \mid p+1$ or $\ell \mid p-1$).
    For technical reasons (to kill the extra automorphisms of $y^2= x^3+x$), we will rather work with a $2$-isogenous curve , that is we take $E_0: y^2=x^3+6x^2+x$. 

    If we add the congruence condition that $2^n \mid p^2-1$ for some $2^n
    > \ell$, so that the $2^n$-torsion of $E_0$ is rational over
    $\F_{p^4}$,
    then we can even use a special HD representation of the
    $\ell$-isogenies.

\begin{proposition}
   \label{prop:pell}
    For $p \in \P^*_\ell = \{p > 11 \text{ prime : } 2^n \cdot \ell \mid p+1, \text{ where } n = \lceil\log_2(\ell)\rceil\}$,
     we can compute $\Phi_{\ell} \mod p$ in
      $O(\log p + \ell^2)$ arithmetic $\Fp$-operations.
\end{proposition}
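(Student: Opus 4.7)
The plan is to instantiate \cref{cor:main} with the curve $E_0 : y^2 = x^3 + 6x^2 + x$, for which the congruence $2^n \ell \mid p+1$ supplies both rational $\ell$-torsion and rational $2^n$-torsion with $2^n > \ell$, hence special HD representations of all $\ell+1$ isogenies of $E_0$. Since $2 \mid p+1$ forces $p \equiv 3 \pmod 4$, the curve $y^2 = x^3+x$ is supersingular; $E_0$, being $2$-isogenous to it, is supersingular as well and has $j$-invariant distinct from $1728$, so carries no extra automorphism. Because $E_0$ is supersingular over $\Fp$, Frobenius over $\F_{p^2}$ acts as $-p$, giving $E_0(\F_{p^2}) = E_0[p+1] \simeq (\Z/(p+1)\Z)^2$; the hypothesis $2^n \ell \mid p+1$ then places both $E_0[\ell]$ and $E_0[2^n]$ inside $E_0(\F_{p^2})$, so that every subsequent computation takes place in $\F_{p^2}$, where arithmetic costs $O(1)$ $\Fp$-operations.

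Next I would construct the $\ell+1$ special HD representations explicitly. Sampling random points of $E_0(\F_{p^2})$ and multiplying by the cofactors $(p+1)/\ell$ and $(p+1)/2^n$ produces generators of $E_0[\ell]$ and of $E_0[2^n]$ after a constant expected number of trials; this one-time setup costs $O(\log p)$ $\Fp$-operations. The $\ell+1$ cyclic order-$\ell$ subgroups are then enumerated from a basis of $E_0[\ell]$ in $O(\ell)$ time, and applying the classical V\'elu formula once per kernel computes all $\ell+1$ isogenies $f_i : E_0 \to E_i$, together with their images on the fixed basis of $E_0[2^n]$, in $O(\ell^2)$ $\Fp$-operations overall. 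By the second part of \cref{thm:rep}, those images are exactly the data required to assemble a special HD representation of each~$f_i$.

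Feeding the $\ell+1$ special HD representations into \cref{cor:main}, applied over $\Fq = \F_{p^2}$, then returns $\Phi_\ell \bmod p$ within the announced bound, the total cost adding the $O(\log p)$ sampling step to the $O(\ell^2)$ V\'elu-plus-HD-assembly step and to the specialised regime of \cref{cor:main}. The main care required in the write-up — rather than a genuine obstacle — is to verify that the $2$-isogeny detour between $y^2 = x^3+x$ and $E_0$ preserves both supersingularity and the $(p+1)$-torsion rationality as claimed, and that the constants hidden in the $O(1)$-per-extension-operation accounting remain bounded uniformly in $\ell$ and $p$.
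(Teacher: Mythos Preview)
Your proposal is correct and mirrors the paper's own argument: you set up the supersingular curve $E_0$ over $\F_{p^2}$, use the congruence $2^n\ell \mid p+1$ to obtain rational $\ell$- and $2^n$-torsion, compute all $\ell+1$ $\ell$-isogenies via V\'elu together with their action on the $2^n$-basis to obtain special HD representations, and then invoke \cref{cor:main}. The paper's proof in \cref{subsec:generalcase} proceeds identically, the only added detail being that for $p \in \P^*_\ell$ (as opposed to $\P_\ell$) the special HD representation is realised through a dimension-$8$ embedding, since $2^n-\ell$ is expressed as a sum of four squares rather than two.
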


We prove this proposition in \cref{subsec:generalcase}.
We call a prime $p \in \P^*_\ell$ a suitable CRT prime. By Dirichlet's theorem on primes in arithmetic progressions, we know that there are sufficiently many
such primes, with the appropriate density (see \cref{lem:primes-in-Pl}).
Using these suitable CRT primes, we obtain \cref{thm:main}.

We remark that the idea to use supersingular curves for modular polynomial computation is not new, and in \cite{leroux2023computation} Leroux also gives a heuristic algorithm relying on supersingular isogenies to compute modular polynomials modulo a prime.

In practice, in the implementation, we use a subset $\P_\ell \subset \P^*_\ell$, defined in \cref{subsec:suitable-primes},
where we replace the condition $n=\lceil\log_2(\ell)\rceil$ (which is
enough to ensure that $2^n - \ell$ can be written as a sum of four squares)
by a condition $2^n - c_{\ell} \ell=a^2+4b^2$.
When $p \in \P_\ell$, this stronger condition ensures that we can find a
special HD representation in dimension~$2$ of our isogenies rather
than in dimension~$8$.
We prove this version of \cref{prop:pell} for $p \in \P_\ell$ in
\cref{subsec:modular-poly-a}.
While this does not change the asymptotic complexity of
the algorithm, it improves the constant and greatly simplifies the
implementation.
However, to achieve the quasi-linear CRT algorithm of
\cref{thm:main} when using primes in $\P_\ell$, we need to rely on a heuristic (\cref{heuristic:suitable-n}), which says that we can find a suitable $n$ such that $2^n$ is not too large compared to $\ell$.
In this paper, we will give detailed algorithms for
\cref{thm:main2,cor:main,thm:main,prop:pell} for the special case when $p
\in \P_{\ell}$, because it is the one we implemented, and briefly explain
how to generalize to the general cases.

\subsection{Generalisations and perspectives}
The techniques used in our algorithms, can also be applied to other settings. Here, we highlight some  generalizations.  \smallskip 

\noindent\textbf{Computing $\Phi_{\ell}$ modulo an arbitary prime~$p$.}
 In this paper, we explain the computation for primes $p \in \P_\ell^*$. This may be extended to general primes~$p$ as follows. We also use supersingular curves, because
    all their $\ell$-isogenies are rational over $\F_{p^2}$.
    We first need to sample a supersingular elliptic $E_0/\F_{p^2}$
    with explicit known endomorphism ring and endomorphism action. 
    This can be seen as a precomputation
    depending only on $p$ and not on $\ell$.
    For instance, we can apply Br\"{o}ker's algorithm \cite{broker2009constructing},
    which is in $O(\log^3 p)$ under GRH.

    The explicit endomorphism ring action of $E_0$ allows us to compute the
    $\ell+1$ $\ell$-isogenies in polynomial time in $\log \ell+\log p$ by
    \cite{wesolowski2022supersingular} under GRH, or unconditionally using the more recent CLAPOTIS method
    \cite[Remark~2.10]{DRclapotis}.
    Namely, we compute the ideals in $\End(E_0)$ corresponding to the
    $\ell+1$ $\ell$-isogenies, and then we 
    use CLAPOTIS to compute an HD representation of the $\ell$-isogenies
    associated to these ideals in $\Otilde((\log \ell+\log p)^{\const})$
    arithmetic operations by ideal.

    This initialisation is only interesting when $\log p$ is sufficiently
    small compared to $\ell$; it is thus not suitable in applications like point counting where we
    usually have $\log p \approx \ell$.
    (Of course conversely if $\log p$ is sufficiently large compared to $\ell$, it is faster to
    just compute $\Phi_{\ell}$ directly.)
    An alternative initialisation approach is described in
    \cite[6.2]{robert2022applications} with better complexity in $\log p$,
    and another heuristic approach is described in \cite{leroux2023computation}.

From the discussion above, we get:
\begin{corollary}
  Let $p$ be a fixed prime. 
  Assume that we are given an explicit supersingular curve $E_0/\F_{p^2}$
  with known full endomorphism ring (as mentionned above such an $E_0$ can
  be computed in $\Otilde(\log^3 p)$ under GRH).
  Then there is a quasi-linear algorithm to
  compute $\Phi_{\ell} \mod p$ in $O(\ell^2 (\log p + \log \ell)^{\const})$,
  for some constant $\const$ which does not depend on $\ell$ or $p$.

  In particular, under GRH, we obtain an algorithm that can compute
  $\Phi_{\ell} \mod p$ in $\Otilde_p(\ell^2)$,
  where the notation $\Otilde_p$ means that the constants depend on~$p$.
\end{corollary}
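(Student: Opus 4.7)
The plan is to chain together the three ingredients already laid out in the preceding discussion. First, from the explicit supersingular $E_0/\F_{p^2}$ with known maximal order $\End(E_0)\subset B_{p,\infty}$, I enumerate the $\ell+1$ left ideals of reduced norm $\ell$ (equivalently, the $\ell+1$ cyclic kernels of order $\ell$ in $E_0[\ell]$, parametrised by $\PP^1(\F_\ell)$). Working inside the known quaternion order this is a routine computation costing $\Otilde((\log p + \log \ell)^{\const})$ per ideal; note that all $\ell$-isogenies from a supersingular curve are automatically rational over $\F_{p^2}$, so the hypothesis of \cref{cor:main} on $\Fq = \F_{p^2}$ is satisfied.

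Second, for each such ideal $I$ I invoke CLAPOTIS \cite[Remark~2.10]{DRclapotis} to obtain an HD representation of the $\ell$-isogeny $f_I : E_0 \to E_0/I$ in $\Otilde((\log p + \log \ell)^{\const})$ arithmetic operations. Summed over the $\ell+1$ ideals, this initialisation costs $\Otilde(\ell(\log p + \log \ell)^{\const})$, which is dominated by the next stage.

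Third, I feed the $\ell+1$ HD representations into \cref{cor:main} applied with $\Fq=\F_{p^2}$. This returns $\Phi_{\ell}\bmod p$ in $\Otilde(\ell^2 \log q) = \Otilde(\ell^2 \log p)$ arithmetic operations in $\F_{p^2}$. Each such operation is itself polynomial in $\log p$, so the total cost fits into $O(\ell^2 (\log p + \log \ell)^{\const'})$ bit operations for a slightly larger constant $\const'$, absorbing the initialisation into the exponent. Fixing $p$ makes $\log p$ a constant, yielding the $\Otilde_p(\ell^2)$ bound as a direct consequence.

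The main obstacle, once the three building blocks are accepted as black boxes, is purely bookkeeping: making sure that the HD representations produced by CLAPOTIS are of the form required by \cref{cor:main} (so that the deformation machinery of \cref{thm:main2} actually applies to them), and that the constant $\const$ tracked through the two stages is uniform in $p$ and $\ell$. The non-trivial content is really packaged inside CLAPOTIS (which gives an \emph{unconditional} HD representation from an ideal, sidestepping the GRH-reliance of \cite{wesolowski2022supersingular}) and inside \cref{cor:main} itself; the corollary here is essentially a composition statement.
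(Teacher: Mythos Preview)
Your proposal is correct and follows essentially the same approach as the paper: the corollary is stated as an immediate consequence of the preceding discussion, which is precisely your three-step composition (enumerate the $\ell+1$ norm-$\ell$ ideals in the known maximal order, convert each to an HD representation via CLAPOTIS at cost $\Otilde((\log\ell+\log p)^{\const})$ per ideal, then invoke \cref{cor:main} over $\F_{p^2}$). Your additional bookkeeping remarks about uniformity of $\const$ and compatibility of the HD representations are reasonable caveats, but the paper treats these as absorbed into the black boxes as well.
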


  \smallskip
\noindent \textbf{$p$-adic lifting.}
  One can extend \cref{thm:main2} to more general Artinian rings with residue
  field~$\Fq$, notably rings of the form
  $R=\Zq[\epsilon]/(\epsilon^{m_1}, p^{m_2})$.
  This allows us to combine our horizontal deformation with a $p$-adic
  lifting, and give 
a $p$-adic lifting version of \cref{cor:main}
which results in
  an alternative way to compute the modular polynomial
  $\Phi_{\ell}$ in quasi-linear time.
  However, the CRT algorithm is better because it can be used to compute
  $\Phi_{\ell} \mod m$ by the explicit CRT.

  \smallskip
\noindent \textbf{Modular polynomials in higher dimension.}
In this paper, for simplicity we confine ourselves to computing
deformations
of isogenies of elliptic curves. Using theta models for abelian varieties,
our methods to compute these deformations efficiently (\cref{thm:main2}) extend naturally to higher dimension.
In subsequent work, we will explain how this paves the way to computing
higher dimensional Siegel modular polynomials in quasi-linear time. Such a
complexity was conjectured in \cite[Conjecture~5.3.14]{hdr}.
The reason we focus on the dimension~$1$ case in this current paper is by
lack of space, for
the simplicity of the exposition\footnote{The main difficulty in higher
  dimension is that we now need to reconstruct multivariate rational
  functions from power series rather than simply polynomials; and also take
into account the algebraic equations between the modular invariants.}, and also because we have only implemented
this case so far.

For modular polynomials in higher dimension, there is currently no known quasi-linear analytic method.
An important step in this direction is achieved in a recent work by Elkies and Kieffer, who developed a quasi-linear algorithm to evaluate theta functions in quasi-linear time in the precision in all dimensions. However,
to compute the modular polynomial efficiently, one would also need a
quasi-linear algorithm for computing period matrices from theta constants,
because the evaluation-interpolation approach requires to select
suitable points of interpolation to obtain a fast multivariate interpolation algorithm. 
Such an algorithm is only known in dimensions~$1$ and~$2$.
The dimension~$2$ case follows from heuristic work of Dupont
\cite{dupont2006moyenne}, and was then rigorously proven by Kieffer in
\cite{kieffer2020sign}.
Kieffer thus describes in \cite{kieffer2020evaluating} a rigorous algorithm to evaluate modular
polynomials in dimension~$2$, which combined with \cite{dupont2006moyenne, milio2015quasi}
gives a rigorous analytic algorithm in dimension~$2$ to compute the full
modular polynomial.

Likewise, the CRT approach of \cite{broker2012modular} crucially uses the
volcano structure;
but isogeny graphs of
principally polarised abelian varieties over a finite field are more
complex, so it is not obvious how to extend this approach to higher dimensions.

By contrast, as we have mentioned, our deformation method generalises in a
natural way to higher dimensions.
We expect that similar deformation techniques could also be used to compute more
general modular correspondences or actions of Hecke operators on modular forms.

\subsection{Outline}
Section \ref{sec:deformations-theory} contains theoretical background on deformations. The results of that section are made explicit in Section \ref{sec:deformations-applications}, where we describe different methods for computing with deformations.  
In Section \ref{sec:modular-Fp}, we present an explicit algorithm to compute modular polynomials over finite fields.  Building on this, we describe a CRT algorithm for computing the modular polynomial in Section \ref{sec:modular}. 

	\section{Deformations and isogenies} \label{sec:deformations-theory}

In this section, we provide an overview on the topic of deformations of principally polarised abelian varieties, and in particular we discuss deformations of isogenies. For more detailed explanations, the reader is referred to \cite{hartshorne2010deformation, sernesi2007deformations}.
Throughout, $k$ denotes a field with characteristic $\char(k) = p > 2$, and $R$ is an Artinian ring of the form $R = k[\epsilon]/(\epsilon^{m+1})$ for some integer $m\geq 0$.\footnote{The exponent $m+1$ is chosen, since we work with $m$-th order deformations (Definition \ref{def:deformation}).}

\subsection{Deformations of (principally polarised) abelian varieties}

Recall that an abelian variety is a group scheme over a field $k$ which is also a proper, geometrically integral variety over $k$. This fits into the following more general framework of abelian schemes.

\begin{definition} \label{def:abelian-scheme}
	An {\em abelian scheme} over a scheme $S$ is a group scheme $\A \to S$ which is smooth, proper and has geometrically connected fibres. If $S = \Spec(k)$, then we call $\A$ an abelian variety.
\end{definition}

In our applications, we always have $S = \Spec(R)$ with $R = k[\epsilon]/(\epsilon^{m+1})$ as above, but everything extends to a general local Artinian ring $R$ with maximal ideal $m_R$ and residue field $R/m_R=k$. Note that in this setting, along with the map $\Spec(k) \to \Spec(R)$ induced
by the projection $R \to R/(\epsilon)$, there is also a map $\Spec(R) \to \Spec(k)$ induced by the canonical inclusion $k \hookrightarrow R$.

\begin{example}
	Let $\E \to \Spec(R)$ be an abelian scheme of dimension $1$, in other words an elliptic curve over $R$.  As is the case for elliptic curves over fields, we can make Definition \ref{def:abelian-scheme} more explicit in this situation.  Since $R$ is local, it can be represented as a subscheme of $\PP_R^2$ defined by an equation of the form
	\[
	\E: Y^2Z + a_1 XYZ + a_3 YZ^2 = X^3 + a_2 X^2Z + a_4 XZ^2 + a_6 Z^3 \,  
	\]
	with $a_1,a_2,a_3,a_4,a_6 \in R$. 
	Note that $\Spec(R) = \{(\epsilon)\}$. Therefore, the scheme $\E$ has only one fibre $E = \E_{(\epsilon)}$. It is obtained by reduction modulo $(\epsilon)$.
	
	Note that points of $\E$ are sections $s: \Spec(R) \to \E$. More explicitly, we may also view a point $P \in \E(R)$ as usual by three projective coordinates, i.e. ${P = (x:y:z) \in \E \subset \PP_R^2}$.
\end{example}

\begin{definition} \label{def:deformation}
	Let $A$ be an abelian variety over $k$. We say that an abelian scheme $\A \to \Spec(R)$ is an {\em $m$-th order deformation} of $A$ if its special fibre $\A \times_R k \to \Spec(k)$ is isomorphic to $A$.
\end{definition}

In our setting, along the special fibre map $\Spec(k) \to \Spec(R)$ induced by $R \to R/(\epsilon)$, there is also a map $\Spec(R) \to \Spec(k)$ induced by the canonical inclusion of $k$ into $R$.
For any abelian variety $A$, there is a trivial deformation given by $\A = A \times_k \Spec(R)$. In general, the group structure of a deformation $\A$ of $A$ is not obvious. However, we can describe the \'etale part of the torsion group explicitly which is explained in the next remark.

\begin{remark} \label{rem:group-strucutre}
	Let $N$ be an integer coprime to $p$, and consider a deformation $\A \to \Spec(R)$ of an abelian variety $A$. 
	Since $\A[N]/\Spec(R)$ is étale and $R$ is Henselian, we have a canonical isomorphism
	$\A[N](R) \cong A[N](k)$ (see~\stackcite{04GG}).
	This isomorphism will be made explicit in Algorithm \ref{algo:lift_point} for elliptic curves. 
	In fact, because $R$ is Henselian, the functor which associates to a
	finite étale cover $\mathcal{X}/\Spec(R)$ its special fibre $X/\Spec(k)$ is an equivalence
	of categories between finite \'etale covers of $\Spec(R)$ and finite étale covers of $\Spec(k)$ (see~\stackcite{0A48}).
	Thus $A[N]$ deforms uniquely to $\A[N]/\Spec(R)$, in our setting this 
	is simply given by the trivial deformation.
	
	We remark that in our applications, we  always work with points of $\A[N]$, and only compute separable isogenies. In these cases, we can use the standard addition laws and formulae for isogeny computation known for abelian varieties over fields.
	Indeed, in \cite[\S~6]{MumfordOEDAV2}, Mumford construct the universal
	abelian scheme of level~$n$ over $\Z[1/n]$ via Riemann's relations,
	in level divisible by $8$ (the construction was then extended by Kempf
	to level divisible by $4$).
	In particular, the addition law and $\ell$-isogeny formulas which use
	these Riemann relations have good reduction over
	$\Z[\frac{1}{2 \ell}]$, hence are valid over $R$ as long as $p \ne
	2, \ell$.

        Although we won't need it, we can make explicit the full structure
        of $\A(R)$ as follows. The kernel of the reduction map $\A(R) \to
        A(k)$ is given by $\A(p)^0(R)$, where $\A(p)^0$ is the connected
        component of the $p$-divisible group of $\A$.
        We have $\A(p)^0(R)=\tilde{\Gamma}(m_R)$ where $\tilde{\Gamma}$ is
        the formal Lie group associated to $\A$.
        Furthermore, since $k$ is perfect, the connected-étale sequence
        splits over $k$, hence we have an exact sequence
        $0 \to \A(p)^0(R)=\tilde{\Gamma}(m_R) \to \A(p)(R) \to
        \A(p)_{\text{\'etale}}(R)=A(p)_{\text{\'etale}}(k) \to 0$.
        We refer to \cite{tate1967p} for more details.
\end{remark}

Note that, as proved by Grothendieck (see~\cite[\S 2.2]{oort1971finite}), the deformation space of a $g$-dimensional abelian variety has dimension $g^2$ which is equal to the dimension of the tangent space.  
We now specialize to the case of principally polarised abelian varieties. 
Recall that a {\em polarisation} $L$ on an abelian variety $A$ is given by an ample line bundle (defined up to translation and potentially over a separable field extension of $k$) which defines a rational morphism to the dual of $A$, $\phi_L: A \to \hat{A}$. And it is called {\em principal} if $\phi_L$ is an isomorphism. 

\begin{definition}
  Let $(A,L)$ be a principally polarised abelian variety over $k$. We say that $(\A, \L)$ is an {\em $m$-th order deformation} of $(A,L)$, if $\A$ is a deformation of $A$ and $\L$ is an ample line bundle on $\A$  with $\L \times_R \Spec(k) \cong L$.\footnote{A slight technicality: deforming the polarisation means deforming the isogeny $\phi_L$. However, if $\phi_L$ is induced by a line bundle over $k$ (rather than over its separable closure), and $\phi_L$ deforms to $R$, then $L$ does too. This is because the obstruction to descending $L$ is given by a smooth torsor, which has a point over $k$ by assumption, so over $R$ too since $R$ is Henselian.}
\end{definition} 

If the polarisation is clear from the context, we write $A$ (resp. $\A$) instead of $(A,L)$ (resp. $(\A,\L)$).
The deformation space of principally polarised abelian varieties has dimension $g(g+1)/{2}$. More precisely, Grothendieck and Mumford proved that for a principally polarised abelian variety $A/k$, the functor of deformations of $A$ is pro-representable by $k[[t_1, \dots, t_{g(g+1)/2}]]$ (see~\cite[\S 2.3]{oort1971finite}). 
Concretely, it is given by the completion at $A$ of the moduli space $\Am_g/k$ of principally polarised abelian varieties\footnote{To handle deformations to a general Artinian ring, we would need to use the completion $\hat{\Am}_{g,A}/\Z \iso \Z_q[[t_1, \dots, t_{g(g+1)/2}]]$ at $A$ of the moduli space over $\Z$ rather than over $k$.}.
Essentially, this means that any $m$-th order deformation $\A$ of $A$ corresponds to a unique ring homomorphism \begin{equation} \label{eq:deformation-map}
\psi_{\A}: k[[t_1, \dots, t_{g(g+1)/2}]] \to R.
\end{equation}
 This motivates the following definition.

\begin{definition}
	Let $A$ be a principally polarised abelian variety over $k$, and $\A$ an $m$-th order deformation of $A$. Then we say that $$\lambda_1 = \psi_{\A}(t_1), \dots, \lambda_{g(g+1)/2} = \psi_{\A}(t_{g(g+1)/2})$$ with $\psi_A$ as in Eq. \ref{eq:deformation-map} are {\em deformation parameters} of $\A$.
\end{definition}

Note that the definition of the deformation parameters depends on the local representation $k[[t_1, \dots, t_{g(g+1)/2}]] $ of the moduli space at $A$. Below, we describe a canonical choice for the case $g=1$.

\begin{example} \label{exa:deformation-parameters}
	In the case of elliptic curves, the deformation space is one-dimensional, hence a deformation is defined by a single deformation parameter $\lambda$.
	
	 Let $E$ be an elliptic curve and suppose that $j(E) \neq 0,1728$. For an $m$-th order deformation $\E$ of $E$, we can define the deformation parameter \[
	 \lambda_{\E} = j(\E) - j(E) \in (\epsilon) \triangleleft k[\epsilon]/(\epsilon^{m+1})
	 \]
	 associated to the $j$-invariant.
	 
	 Note that as long as $j(E) \neq 0,1728$, we can construct the universal elliptic curve over the universal deformation ring. More explicitly, for a given deformation parameter $\lambda \in (\epsilon)$, we can consider 
	  \[
	  \E: y^2 = x^3+ax+b, \quad \text{where } a=b=\frac{27 (j(E)+\lambda)}{4(1728-(j(E)+\lambda))},
	  \]
	  which defines an elliptic curve over $R$ with $j$-invariant $j(\E) = j(E) + \lambda$.
	  Over $j(E)=0, 1728$ the universal elliptic curve only exists as
	  an algebraic stack \cite{deligne_SchemasModulesCourbes1973}, which is why we are going to use $y^2 = x^3+6x^2+x$ rather than $y^2 = x^3+x$ as the initial point in our algorithms. 
\end{example}

\subsection{Deformations of isogenies}

Let $f:A \to A'$ be an isogeny of principally polarized abelian varieties over a field $k$. Here, we discuss deformations of such an isogeny to $R$. In particular, we are interested in deformations of product isogenies, i.e. isogenies between decomposable principally polarized abelian varieties, and the behaviour of deformation parameters under isogenies.

Recall that a morphism of group schemes  $f:\A \to \A'$ over $\Spec(R)$ is an isogeny if it is surjective and its kernel is a flat finite $\Spec(R)$-group scheme.

\begin{proposition} \label{prop:lift-isogenies}
	Let $f: A \to A'$ be a separable isogeny of principally polarised abelian varieties defined over $k$. Consider an $m$-th order deformation $\A$ of $A$. Then up to isomorphism there exists a unique deformation $\A'$ of $A'$ so that $f$ lifts to an isogeny $\tilde{f}: \A \to \A'$ over $R$.
\end{proposition}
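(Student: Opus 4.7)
The plan is to lift the kernel of $f$ via the étale topology and then define $\A'$ as the quotient. Since $f$ is separable, its kernel $K := \ker(f)$ is a finite étale $k$-group scheme; in particular $N := \deg(f)$ is coprime to $p$ and $K \hookrightarrow A[N]$. By \cref{rem:group-strucutre}, the $N$-torsion of $A$ admits a unique deformation $\A[N] \hookrightarrow \A$, and because $R$ is Henselian the special-fibre functor is an equivalence of categories on finite étale covers (\stackcite{0A48}). Applied to the finite étale cover $K \to A[N]$, this produces a unique finite étale subgroup scheme $\tilde K \hookrightarrow \A[N]$ whose special fibre is $K$.

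Next I would form the fppf quotient $\A' := \A/\tilde K$, with quotient map $\tilde f : \A \to \A'$. Since $\tilde K$ is finite, flat, and étale over $R$, this quotient exists as a smooth proper $R$-group scheme; its geometric fibres are connected because the single fibre $\A' \times_R k \simeq A/K = A'$ is so, and thus $\A'$ is an abelian scheme deforming $A'$. By construction $\tilde f \equiv f \pmod{\epsilon}$.

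For the polarisation, the Weil pairing on $\A[N]$ is compatible with the one on $A[N]$ through the canonical identification $\A[N](R) \simeq A[N](k)$, so the isotropy of $K$ with respect to the pairing attached to $L$ propagates to $\tilde K$. Hence $\phi_{\L}$ descends to a polarisation $\phi_{\L'}$ on $\A'$ whose reduction is $\phi_{L'}$; principality is preserved since $\phi_{\L'}$ reduces to an isomorphism and both source and target are $R$-flat (Nakayama), and the line bundle $\L'$ itself exists thanks to the Henselian descent observation recorded in the footnote after the definition of a deformed polarisation.

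Uniqueness follows immediately: any deformation $\tilde f : \A \to \A''$ of $f$ has separable kernel which is an étale lift of $K$, hence equal to $\tilde K$ by uniqueness in the equivalence of étale categories; factoring through the quotient yields a canonical isomorphism $\A'' \simeq \A/\tilde K = \A'$ intertwining the two lifts. The most delicate step is verifying that the fppf quotient $\A/\tilde K$ is again an abelian scheme together with the lift of the polarisation; this is standard material (fppf descent plus smoothness of quotients by finite flat subgroup schemes), but it is the only point where genuine deformation-theoretic input beyond the étale equivalence is required.
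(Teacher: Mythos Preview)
Your argument is correct and follows the same route as the paper: lift the \'etale kernel via the Henselian equivalence of finite \'etale covers and take the quotient $\A/\tilde K$. The paper's proof is terser---it neither spells out uniqueness nor addresses the polarisation---so your additional paragraphs on descending $\phi_{\L}$ and on the uniqueness of $\tilde K$ fill in details the paper leaves implicit rather than constituting a different approach. (One minor wording point: $K \to A[N]$ is a closed immersion, not a finite \'etale cover; what you are really using is that the special-fibre functor is an \emph{equivalence of categories} on finite \'etale $R$-schemes, so morphisms lift too.)
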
 

This is a classical result, which stems from the fact that the forgetful
map $\Am_g(\Gamma_0(\ell)) \to \Am_g$ is étale. Recall that $\Am_g(\Gamma_0(\ell))$ denotes the moduli space of principally polarized abelian varieties with $\Gamma_0(\ell)$-structure, i.e. the elements are principally polarized abelian varieties together with the kernel of an $\ell$-isogeny.
Since we need to lift isogenies explicitly in our applications, we provide a short constructive proof below.

\begin{proof}[Proof of Proposition \ref{prop:lift-isogenies}]
  Let $G$ be the kernel of the isogeny $f:A \to A'$. By assumption, $f$ is separable, hence $G \subset A[N]$ where $N$ is coprime to $p$. Now consider the deformation $\A$ of $A$ to $R$. Since
  $R$ is Henselian, it has the same finite étale covers as $k$.  It follows that there is a unique lift  $\tilde{G} \subset \A[N]$ of $G$.
  We can set $\A' = \A/\tilde{G}$ which is again a group scheme. Necessarily $\A'$ is a deformation of $A'$, and the morphism $\tilde{f}: \A \to \A/\tilde{G}$ is an isogeny lying above $f$. 
\end{proof}

The following proposition tells us how deformations behave under isogenies. To obtain a description in terms of deformation parameters, we assume that some canonical choice on the local representation of the moduli space has been made. We simply refer to {\em the} deformation parameters, when we mean the deformation parameters induced by this choice. 

\begin{proposition} \label{prop:deformation-parameters}
	Let $f:A\to A'$ be a separable isogeny of principally polarized abelian varieties over $k$ and $m\geq 1$, then there exist polynomials \[
	h_1, \dots, h_{g(g+1)/2} \in k[x_1,\dots, x_{g(g+1)/2}]
	\] of degree at most $m$ with the following property:\\	
	For any $m$-th order deformation $\A$ of $A$ with parameters $(\lambda_1, \dots, \lambda_{g(g+1)/2})$, the deformation $\A'$ with parameters $(\lambda'_1, \dots, \lambda'_{g(g+1)/2})$, where 
	\[
	\lambda'_i = h_i(\lambda_1,\dots, \lambda_{g(g+1)/2})
	\]is the unique deformation of $A'$ for which $f$ lifts to an isogeny $\tilde{f}:\A \to \A'$. 
\end{proposition}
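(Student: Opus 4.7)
The plan is to reinterpret the statement in terms of the universal deformation rings and to use the fact that, for separable isogenies, the forgetful map from the moduli space with level structure is étale (as was already invoked in the proof of \cref{prop:lift-isogenies}).

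First I would identify the deformation functors of $A$ and of $A'$ with their universal deformation rings. By Grothendieck--Mumford, both are pro-represented by $k[[t_1,\dots,t_{g(g+1)/2}]]$, and a choice of deformation parameters corresponds to a choice of generators of the maximal ideal. An $m$-th order deformation $\A$ of $A$ with parameters $(\lambda_1,\dots,\lambda_{g(g+1)/2})$ then amounts precisely to the ring homomorphism $\psi_{\A}: k[[t_1,\dots,t_{g(g+1)/2}]]\to R$ sending $t_i\mapsto \lambda_i$, and similarly for $\A'$.

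Next I would use the moduli interpretation of the lifting statement. Let $N$ be an integer coprime to $p$ with $\ker f\subset A[N]$ and consider the moduli space $\Am_g(\Gamma)$ of principally polarised abelian varieties equipped with a finite flat subgroup scheme of the same type as $\ker f$. The two projections $p_1,p_2:\Am_g(\Gamma)\to \Am_g$, sending $(B,H)$ to $B$ and to $B/H$ respectively, give a correspondence on $\Am_g$. The key input is that $p_1$ is étale at $[A,\ker f]$: indeed, since $\ker f$ is étale over $k$ and $R$ is Henselian, it lifts uniquely to any deformation of $A$, exactly as in the proof of \cref{prop:lift-isogenies}. Hence $p_1$ induces an isomorphism on completed local rings, and composing its inverse with $p_2$ gives a homomorphism of complete local rings
\[
\Phi_f : \widehat{\O}_{\Am_g,[A']}\longrightarrow \widehat{\O}_{\Am_g,[A]}.
\]
Once parameters have been fixed on both sides, this corresponds to power series $H_i(t_1,\dots,t_{g(g+1)/2})\in (t_1,\dots,t_{g(g+1)/2})$ (no constant term, since both maximal ideals are preserved) giving the image of each $t_i$.

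Finally I would extract $h_i$ by truncation. For any $m$-th order deformation $\A$ with parameters $\lambda_j\in(\epsilon)\subset R$, the universal property applied to $p_1^{-1}\circ p_2$ shows that the unique deformation $\A'$ of $A'$ to which $f$ lifts has parameters $\lambda'_i=H_i(\lambda_1,\dots,\lambda_{g(g+1)/2})\in R$. Because each $\lambda_j$ lies in the nilpotent ideal $(\epsilon)$ with $\epsilon^{m+1}=0$, every monomial in the $\lambda_j$'s of total degree strictly greater than $m$ evaluates to $0$. Therefore the value of $H_i$ at $(\lambda_1,\dots,\lambda_{g(g+1)/2})$ depends only on the truncation of $H_i$ to total degree $\leq m$, which is a polynomial $h_i\in k[x_1,\dots,x_{g(g+1)/2}]$ of degree at most $m$. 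These polynomials depend only on $f$ and on the choice of parameters, not on $\A$, which is the required statement. The only real subtlety is the étaleness of $p_1$; once that is in hand, everything else is a formal consequence of pro-representability and of nilpotency in $R$.
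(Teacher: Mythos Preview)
Your proposal is correct and follows essentially the same route as the paper: both arguments pass through the completed local rings of $\Am_g$ at $A$ and $A'$, use étaleness of the projection $\Am_g(\Gamma_0(\ell))\to\Am_g$ to identify the completion at $(A,\ker f)$ with $\hat{\Am}_{g,A}$, compose with the other projection to obtain power series $H_i$ (the paper's $\tilde h_i$), and then truncate at degree~$m$. Your write-up is in fact slightly more explicit than the paper's about why truncation is legitimate (nilpotency of the $\lambda_j$) and only invokes étaleness of the one projection actually needed.
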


\begin{proof}
	Let $\hat{\Am}_{g,A} = k[[t_1,\dots,t_{g(g+1)/2}]]$ and $\hat{\Am}_{g,A'} = k[[t_1',\dots,t_{g(g+1)/2}']]$ be the local completions of the moduli space $\Am_g$ at $A$ and $A'$, that induce our canonical choice of deformation parameters. The isogeny $f$ 
	corresponds to a point $(A, \Ker f) \in \Am_g(\Gamma_0(\ell))$.
	The modular correspondence $\corresp: \Am_g(\Gamma_0(\ell)) \to
	\Am_g \times \Am_g$ is given by two étale projection maps.
	This induces an (analytic, i.e. continuous) isomorphism of the completion of
	$\Am_g(\Gamma_0(\ell))$ at $(A, \Ker f)$ with
	$\hat{\Am}_{g,A}$ and $\hat{\Am}_{g,A'}$ respectively.
	In particular, $f$
	induces an isomorphism 
	\(
	\phi_f: \hat{\Am}_{g,A} \to \hat{\Am}_{g,A'}
	\). 
	Thus, we can write 
	\[
	t_i' = \tilde{h}_i \in k[[\phi_f(t_1),\dots,\phi_f(t_{g(g+1)/2})]]
	\] 
	for each $i \in \{1,\dots, g(g+1)/2\}$. Now the polynomials $h_1, \dots, h_{g(g+1)/2}$ from the statement of the proposition are obtained by truncating these formal power series at degree $m$. 
\end{proof}

\begin{remark}
	The modular polynomial $\Phi_{\ell}$ describes the image of the modular
	correspondence $\corresp: \Am_1(\Gamma_0(\ell))/k \to \Am_1/k \times \Am_1/k$,
	it can also be seen as $\corresp$ evaluated on the generic point of
	$\Am_1/k$.
	From this point of view \cref{cor:main} is natural: we start with the
	modular correspondence $\corresp$ evaluated at one point $E \in \Am_1/k$, i.e. with
	all $\ell$-isogenies starting from $E$.
	More precisely, the fibre
	$(\pi_1 \circ \corresp)^{-1}(E)$ gives the isogenies and
	evaluating
	$(\pi_2 \circ \corresp)$ on the fibre gives the codomains.
	We then evaluate $\corresp$ at the universal deformation of $E$ at a high
	enough precision to recover $\corresp$ on the generic point.
\end{remark}

\subsection{Deformations of product isogenies} 

We now restrict ourselves to the special case of product isogenies in dimension $2$, that is isogenies between decomposable principally polarized abelian surfaces. Let us first recall Kani's Lemma which explains the relation between isogeny diamonds and product isogenies. 

\begin{definition} \label{def:isogeny-diamond}
	Let $E, E_a, E_b, E_{ab}$ be elliptic curves and $f,g,f',g'$ isogenies of degree $d_a = \deg(f) = \deg(f')$ and $d_b = \deg(g) = \deg(g')$ that fit into the following commutative diagram.
	\begin{equation} \label{eq:isogeny-diamond}
		\begin{tikzcd}
			E \arrow[r, "f"] \arrow[d, "g"] & E_a \arrow[d, "g'"] \\
			E_b \arrow[r, "f'"]& E_{ab}
		\end{tikzcd}
	\end{equation}
	
	Then $g \circ f' = f \circ g'$ is called a {\em $(d_a,d_b)$-isogeny diamond}. 
\end{definition}

\begin{lemma}[Kani's Lemma \cite{kani1997number}] \label{lem:kani}
	Let $g \circ f' = f \circ g'$ be a $(d_a,d_b)$-isogeny diamond and denote $N = d_a + d_b$. Then $F = \begin{pmatrix}
		f & \hat{g'}\\ -g & \hat{f'}
	\end{pmatrix}$ is an $N$-isogeny 
	$F : E \times E_{ab} \to E_a  \times E_b$.  If $\gcd(d_a,d_b) = 1$, then
	\begin{align*}
		\ker(F) 
		=&~ \left\langle \left(-\hat{g}(P_N), f'(P_N)\right), \left(-\hat{g}(Q_N), f'(Q_N)\right)  \right\rangle, 
	\end{align*}
	where $E_a[N] = \langle P_N, Q_N\rangle$.
\end{lemma}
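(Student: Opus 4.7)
The plan splits naturally into two parts: first, checking that the block matrix $F$ is an $N$-isogeny (so in particular $|\ker F|=N^2$); and second, pinning down explicit generators for $\ker F$ under the coprimality hypothesis.

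For the first part, I form the dual blockwise, $\hat F = \begin{pmatrix} \hat f & -\hat g \\ g' & f'\end{pmatrix}$, and expand $\hat F \circ F$ entry by entry. The diagonal entries simplify using $\hat f f = [d_a]$, $\hat g g = [d_b]$, $g' \hat{g'} = [d_b]$ and $f' \hat{f'} = [d_a]$ to $[d_a+d_b]=[N]$ on each factor. The off-diagonal entries are $g' f - f' g$ and its dual; both vanish by commutativity of the diamond. Hence $\hat F \circ F = [N]_{E \times E_{ab}}$, which simultaneously gives that $F$ is an $N$-isogeny and that $\deg F=N^2$.

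For the kernel description I first establish the auxiliary identities $f \circ \hat g = \hat{g'} \circ f'$ and $g \circ \hat f = \hat{f'} \circ g'$. These are derived by postcomposing the commutativity relation with $g$ (respectively $f$) on the right: for instance $(f\hat g - \hat{g'} f') \circ g = [d_b] f - \hat{g'} g' f = 0$, and since $g$ is an isogeny of abelian varieties (in particular an epimorphism, and $\mathrm{Hom}$ between abelian varieties is torsion-free so that right-cancellation is valid), the identity drops out.

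With those identities in hand, a direct substitution shows that for any $P$ in the relevant $N$-torsion, $F(-\hat g(P), f'(P))=0$: the first coordinate is $-f\hat g(P)+\hat{g'}f'(P)=0$ by the auxiliary identity, and the second is $g\hat g(P)+\hat{f'}f'(P)=[N]P=0$. This defines a homomorphism from the $N$-torsion (of order $N^2$) into $\ker F$. The coprimality hypothesis enters exactly here to show injectivity: an element in the kernel of this homomorphism lies in $\ker \hat g \cap \ker f'$, a group whose order divides $\gcd(d_b,d_a)=1$, hence trivial. Combined with the count $|\ker F|=N^2$ from the first part, the map is a bijection onto $\ker F$, and choosing a basis $\langle P_N, Q_N\rangle$ of the $N$-torsion produces the two generators in the statement.

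The main subtle point I expect is legitimising the cancellation step in the middle: passing from $(f\hat g) \circ g = (\hat{g'} f') \circ g$ to $f\hat g = \hat{g'} f'$ uses torsion-freeness of $\mathrm{Hom}$ between abelian varieties, which is standard but worth flagging in positive characteristic. Everything else is straightforward block-matrix bookkeeping and a counting argument.
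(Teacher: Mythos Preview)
The paper does not prove this lemma; it is stated with a citation to Kani's original paper \cite{kani1997number} and no argument is given. So there is no paper proof to compare against.

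Your argument is correct and is the standard one. The verification $\hat F\circ F=[N]$ is clean and yields $\deg F=N^2$ immediately. The auxiliary identity $f\hat g=\hat{g'}f'$ is derived correctly; your justification via surjectivity of $g$ is already sufficient (if $\phi\circ g=0$ with $g$ an epimorphism of varieties then $\phi=0$), so the appeal to torsion-freeness of $\mathrm{Hom}$ is not even needed, though it also works after postcomposing with $\hat g$. The injectivity step, using that the order of $\ker\hat g\cap\ker f'$ divides both $d_b$ and $d_a$ and hence $\gcd(d_a,d_b)=1$, is exactly right, and the counting then forces the map onto $\ker F$.

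One small remark: the statement in the paper reads $E_a[N]=\langle P_N,Q_N\rangle$, but the maps $\hat g$ and $f'$ both have domain $E_b$, so the basis must live in $E_b[N]$; you implicitly worked with the correct curve by writing ``the relevant $N$-torsion''. This is a typo in the statement, not a gap in your proof. You may also want to note, for completeness, that the counting argument $|E_b[N]|=N^2=|\ker F|$ uses that $N$ is prime to the characteristic (which holds throughout the paper's applications).
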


Note that Kani's Lemma can be generalised to abelian varieties of arbitrary dimension, see \cite{EC23:Robert}, and is the key to the HD representation of isogenies in \cite{robert2022evaluating}.

The following corollary is an easy consequence of the unique lifting of isogenies (Proposition \ref{prop:lift-isogenies}) and Kani's lemma. It describes when product isogenies lift to product isogenies.

\begin{corollary} \label{cor:unique-lift-isogeny-diamond}
	Let $E,E_{ab}$ be elliptic curves over $k$ and  $F: E \times E_{ab} \to E_a \times E_b$ a product isogeny. Consider an $m$-th order deformation $\E$ of $E$. Then up to isomorphism there exist unique deformations $\E_{ab}$, $\E_a$, $\E_b$ of $E_{ab},E_a,E_b$, so that $F$ lifts to a product isogeny
	\[
	\tilde{F}: \E \times \E_{ab} \to \E_a \times \E_b.
	\]
	
\end{corollary}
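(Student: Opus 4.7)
My plan is to reduce the statement to repeated applications of \cref{prop:lift-isogenies} by exploiting the decomposition of the product isogeny $F$ into an underlying isogeny diamond, which is provided by Kani's lemma.

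First, I would invoke \cref{lem:kani} (in its converse/``reading off'' form): the product isogeny $F:E\times E_{ab}\to E_a\times E_b$ is of the shape $\begin{pmatrix} f & \hat{g'}\\ -g & \hat{f'}\end{pmatrix}$ where $f:E\to E_a$, $g:E\to E_b$, $g':E_a\to E_{ab}$, $f':E_b\to E_{ab}$ form an isogeny diamond with $g'\circ f=f'\circ g$. Since $F$ is a separable isogeny, each of $f,g,f',g'$ is separable. Starting from the given deformation $\E$ of $E$, I apply \cref{prop:lift-isogenies} to $f$ to obtain a deformation $\E_a$ of $E_a$ and a lift $\tilde f:\E\to\E_a$, and similarly to $g$ to obtain $\E_b$ and $\tilde g:\E\to\E_b$.

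Next, I apply \cref{prop:lift-isogenies} to $g'$ starting from $\E_a$, obtaining a deformation $\E_{ab}$ of $E_{ab}$ and a lift $\tilde{g'}:\E_a\to\E_{ab}$. The point to check is that $f'$ also lifts to an isogeny $\tilde{f'}:\E_b\to\E_{ab}$ with the \emph{same} target $\E_{ab}$. For this, let $\E_{ab}''$ be the deformation of $E_{ab}$ and $\tilde{f'}:\E_b\to\E_{ab}''$ the lift produced by applying \cref{prop:lift-isogenies} to $f'$ starting from $\E_b$. Then $\tilde{g'}\circ\tilde f:\E\to\E_{ab}$ and $\tilde{f'}\circ\tilde g:\E\to\E_{ab}''$ are both deformations of the single isogeny $g'\circ f=f'\circ g:E\to E_{ab}$; the uniqueness part of \cref{prop:lift-isogenies} then yields an isomorphism $\E_{ab}\cong\E_{ab}''$ through which $\tilde{f'}$ becomes a lift of $f'$ with target $\E_{ab}$. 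Dualising gives the lifts $\widetilde{\hat{g'}}:\E_{ab}\to\E_a$ and $\widetilde{\hat{f'}}:\E_{ab}\to\E_b$ needed to assemble
\[
\tilde F=\begin{pmatrix}\tilde f & \widetilde{\hat{g'}}\\ -\tilde g & \widetilde{\hat{f'}}\end{pmatrix}\colon \E\times\E_{ab}\longrightarrow \E_a\times\E_b,
\]
which is a morphism of group schemes over $R$ reducing to $F$ modulo $\epsilon$; flatness of the kernel follows since $\tilde F$ is a lift of an isogeny between smooth group schemes, with the same (étale) degree, so $\tilde F$ is itself an isogeny.

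For uniqueness, suppose $\E_a',\E_b',\E_{ab}'$ are other deformations equipped with a product isogeny $\tilde F':\E\times\E_{ab}'\to\E_a'\times\E_b'$ lifting $F$. Reading off the four matrix entries gives lifts of $f,g,\hat{g'},\hat{f'}$ based on $\E,\E_{ab}'$. Comparing with the lifts constructed above via the uniqueness clause of \cref{prop:lift-isogenies} (applied coordinate by coordinate) produces the required isomorphisms $\E_a'\cong\E_a$, $\E_b'\cong\E_b$, and $\E_{ab}'\cong\E_{ab}$. The only delicate point in the argument is the compatibility check in the previous paragraph, namely that the two independent liftings of the diamond along $f,g'$ and along $g,f'$ produce the same $\E_{ab}$; this is exactly where the uniqueness of lifting the composition $g'\circ f=f'\circ g$ is indispensable.
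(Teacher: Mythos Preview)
Your proposal is correct and follows essentially the same approach as the paper, which states only that the corollary is ``an easy consequence of the unique lifting of isogenies (\cref{prop:lift-isogenies}) and Kani's lemma'' without spelling out the details. You have filled these in faithfully: decompose $F$ via Kani's lemma into the underlying isogeny diamond, lift the edges one by one using \cref{prop:lift-isogenies}, and use the uniqueness clause applied to the composite $g'\circ f=f'\circ g$ to show the two paths around the diamond yield the same deformation $\E_{ab}$.
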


\begin{remark}
	Let $F: E \times E_{ab} \to E_a\times E_b$ as in Corollary \ref{cor:unique-lift-isogeny-diamond}. Further let  $\E$ and $\E_{ab}$ be arbitrary deformations of $E$ and $E_{ab}$. Then the isogeny $F$ lifts uniquely to an isogeny $\tilde{F}: \E \times \E_{ab} \to \A$, where $\A$ is a deformation of $E_a \times E_b$. To understand, when $\tilde{F}$ is a product isogeny, we recall from Kani's lemma that any such isogeny is induced by an isogeny diamond as in Eq. \ref{eq:isogeny-diamond}. This means that the deformation $\A$ is a product if and only if there is an isogeny $\tilde{h}: \E \to \E_{ab}$ lifting the isogeny $f' \circ g = g' \circ f$ underlying the product isogeny $F$. 
\end{remark}

The next result describes a convenient tool for computing deformations of product isogenies. This makes use of the Siegel modular cusp form $\chi_{10}$. Recall that 
\[
\chi_{10}(\Jac(C)) =  -2^{-12} \cdot \disc(f), \qquad \chi_{10}(E\times E') = 0,
\]
where $C:y^2=f(x)$ is a genus-$2$ curve, and $E$, $E'$ are elliptic curves.
In particular $\chi_{10}$ can be used to distinguish decomposable from indecomposable elements in the moduli space.

The following corollary is a consequence of Proposition \ref{prop:deformation-parameters} applied to product isogenies. To make a precise statement, we require that isogenies are {\em normalised}. Essentially, this means that the isogeny acts as the identity on the basis of differentials.

\begin{corollary} \label{cor:chi10}
	Let $E,E_{ab}$ be elliptic curves over a field $k$ and  $F: E \times E_{ab} \to E_a \times E_b$ a product isogeny. Then there exists a polynomial $h\in k[x_1,x_2]$ of degree at most $m$ with the following property:
	
	Let $\E$ and $\E_{ab}$ be arbitrary $m$-th order deformations of $E$ and $E_{ab}$ with deformation parameters $\lambda = j(\E)-j(E)$ and $\lambda_{ab} = j(\E_{ab})  - j(E_{ab})$, respectively. Further let $\tilde{F}:\E \times \E_{ab} \to \A$ be a lift of $F$ and assume the isogeny is {\em normalised}. Then 
	\[
	\chi_{10}(\A) = h(\lambda, \lambda_{ab}).
	\] 
\end{corollary}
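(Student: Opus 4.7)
The plan is to realise \cref{cor:chi10} as a direct specialisation of \cref{prop:deformation-parameters} to the case $g=2$, followed by a restriction to the product deformation locus and composition with $\chi_{10}$. Since $g(g+1)/2 = 3$ when $g=2$, applying \cref{prop:deformation-parameters} to $F : E \times E_{ab} \to E_a \times E_b$ (viewed as a separable isogeny of $2$-dimensional principally polarised abelian varieties) produces three polynomials $h_1, h_2, h_3 \in k[x_1, x_2, x_3]$ of degree at most $m$ that express the deformation parameters of $E_a \times E_b$ in terms of those of $E \times E_{ab}$.

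First I would fix local coordinates $(t_1, t_2, t_3)$ on $\hat{\Am}_{2, E \times E_{ab}}$ such that $t_1, t_2$ restrict to $j(\E)-j(E)$ and $j(\E_{ab})-j(E_{ab})$ on product deformations, while $t_3$ measures departure from decomposability (for example, the off-diagonal entry of a Siegel period matrix, chosen so that it vanishes identically on the product locus). Product deformations $\E \times \E_{ab}$ then correspond to the slice $t_3 = 0$, and the target parameters become
\[
\tilde h_i(x_1, x_2) \;:=\; h_i(x_1, x_2, 0) \;\in\; k[x_1, x_2], \qquad \deg \tilde h_i \leq m.
\]
Next, after choosing a trivialisation of $\omega^{\otimes 10}$ near $E_a \times E_b$, $\chi_{10}$ admits a Taylor expansion $\hat\chi_{10}(y_1, y_2, y_3) \in k[[y_1, y_2, y_3]]$, and I would define $h \in k[x_1, x_2]$ to be the truncation to total degree $\leq m$ of the formal power series $\hat\chi_{10}(\tilde h_1, \tilde h_2, \tilde h_3)$. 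Because $\lambda, \lambda_{ab} \in (\epsilon)$ satisfy $\lambda^a \lambda_{ab}^b \equiv 0 \pmod{\epsilon^{m+1}}$ whenever $a+b > m$, evaluating the full composed series at $(\lambda, \lambda_{ab})$ agrees in $R$ with $h(\lambda, \lambda_{ab})$, yielding the required identity $\chi_{10}(\A) = h(\lambda, \lambda_{ab})$.

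The hard part will be justifying that $\chi_{10}(\A)$ is an honest scalar and that the resulting $h$ is independent of the particular choice of $\E$ and $\E_{ab}$. Since $\chi_{10}$ is a section of the line bundle $\omega^{\otimes 10}$ on $\Am_2$, its value on an abelian variety is only defined up to a choice of basis of global differentials. The normalisation hypothesis on $\tilde F$ is precisely what resolves this ambiguity: requiring $\tilde F^{*}$ to act as the identity on a prescribed basis of differentials transports the trivialisation on $\E \times \E_{ab}$ induced by the chosen differentials on $\E$ and $\E_{ab}$ to a canonical trivialisation on $\A$, under which $\chi_{10}(\A)$ becomes a bona fide element of $R$. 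With this compatibility in hand, the coefficients of $h$ depend only on $F$ and the initial normalisation on $E$, $E_{ab}$, and not on the chosen deformations, so $h \in k[x_1, x_2]$ is well-defined with $\deg h \leq m$.
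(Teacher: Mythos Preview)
Your proposal is correct and follows the same underlying idea as the paper: use the \'etale modular correspondence (as in \cref{prop:deformation-parameters}) together with the normalisation hypothesis to trivialise the line bundle $\hodge^{10}$ and thereby interpret $\chi_{10}$ of the codomain as a power series in the domain-side deformation parameters. The packaging differs slightly. You decompose into two steps: first push the deformation parameters through the $h_i$ of \cref{prop:deformation-parameters}, then compose with a local power-series expansion of $\chi_{10}$ on the codomain side. The paper instead defines $\chi_{10,\ell}$ directly as a level-$\Gamma_0(\ell)$ modular form on $\Am_2(\Gamma_0(\ell))$, pulls back the trivialisation of $\hodge$ determined by the curve equations on the domain $\E\times\E_{ab}$ via the correspondence $\corresp$, and reads $\chi_{10,\ell}$ as an element of the product-deformation ring $S$ in one stroke. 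This avoids the mild awkwardness in your exposition where you first choose ``a'' trivialisation near $E_a\times E_b$ and only later identify it with the one transported by the normalised $\tilde F$; in the paper's formulation, the correct trivialisation is built in from the start. Your aside about the off-diagonal Siegel period entry is complex-analytic and should be replaced by an algebraic coordinate cutting out the product locus (e.g.\ one coming from theta constants), but this does not affect the argument.
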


\begin{proof}
	On a decomposable abelian surface $A = E \times E_{ab}$, a canonical choice of deformation parameters is given by choosing two deformation parameters $\lambda$ and $\lambda_{ab}$ for $E$ and $E_{ab}$, respectively. Note that altering only these two deformation parameters preserves the product structure on the resulting deformation. 
	Let $S \subset \hat{\Am}_{2, E \times E_{ab}}$ be the subring of the
	deformation ring given by product deformations. 

  A difference with \cref{prop:deformation-parameters} is that $\chi_{10}$
  is a modular form rather than a modular function.
        Let $\chi_{10,\ell}$ be the modular form which associates
        to a normalised $\ell$-isogeny $F: (A,\omega_A) \to (B,\omega_B)$
        the value $\chi_{10}(B,\omega_B)$.
        (Normalised means that ${F}^\ast \omega_B=\omega_A$.)
        This is a modular form of level $\Gamma_0(\ell)$, hence a section
        of a power of the Hodge line bundle $\hodge^{10}$ on
        $\Am_2(\Gamma_0(\ell))$. \footnote{Note that over $\CC$, $\chi_{10,\ell}$ corresponds to the modular forms of level $\Gamma_0(\ell)$: $\tau \mapsto \chi_{10}(\tau/\ell)$.}

  Now given curve equations for $\E \times \E_{ab}/R$, we get a canonical
  basis of differentials $((dx/y, 0), (0, dx_{ab}/y_{ab}))$, hence a 
  trivialisation of the Hodge bundle $\hodge$ above $S$, i.e. an
  isomorphism $S \to S \tens_{\Am_2} \hodge$.
  We can use the modular correspondence $\corresp$ as in
  \cref{prop:deformation-parameters}
  to see
  $S$ as a deformation ring of the isogeny $F$,
  and pulling  back the trivialisation of the Hodge line bundle
  defined above,
  we can interpret $\chi_{10,\ell} \in \Gamma(S \tens_{\Am_2} \hodge^{10})$
  as an element of $S$, given by a polynomial $h$.
\end{proof}

\begin{remark}
  In \cref{sec:deformations-applications}, we will apply Corollary \ref{cor:chi10} to a $(2^n,2^n)$-isogeny $F$ induced by an isogeny diamond. The isogeny computation consists of a gluing
  isogeny, followed by a chain of Richelot isogenies, followed by a
  splitting isogeny, hence we need to ensure that each step is normalised.
  The differentials are implicitly kept track off by the curve equation:
  to an elliptic curve $y^2=x^3+ax+b$ we associate the differential $dx/y$,
  and to an hyperelliptic curve of genus~$2$ the differentials $(dx/y, x
  dx/y)$. Richelot formulas work at the level of curve equations and give
  normalised formulas.
\end{remark}

	\section{Computing with deformations} \label{sec:deformations-applications}
In this section we explain different algorithms related to the computation of deformations. This includes the lifting of torsion points to deformed elliptic curves (Algorithm \ref{algo:lift_point}), as well as the lifting of $(2,2)$-isogeny chains (Algorithm \ref{algo:Richelot-chain}). Further, we present a method for lifting product isogenies (Algorithm \ref{algo:lift_isogeny_diamond}) which lies at the heart of our algorithms for the computation of modular polynomials. The section concludes with the deformation of general isogenies and proves  \cref{thm:main2}.

As in the previous section, $k$ is a field with characteristic $p \neq 0$, and $R = k[\epsilon]/(\epsilon^{m+1})$.

\subsection{Arithmetic in $R = \FF_q[\epsilon]/(\epsilon^{m+1})$}
\label{subsec:arithmetic-in-R}
In practice, we work with a finite field $k = \FF_q$ in all algorithms. Note that computing in the Artin ring $R$ is equivalent to computing in the formal power series ring $\FF_q[[\epsilon]]$ with precision $m+1$.  

Throughout, we use the Sch\"onhage-Strassen bound
\[
\mathsf{M}(n) = O(n\log n \log\log n)
\]
to describe the complexity for the multiplication of two $n$-bit integers.\footnote{We remark that there exists an algorithm for integer multiplication in $O(n\log n)$ by Harvey and van der Hoeven, \cite{harvey2021integer}. Here, we however use the bound $\mathsf{M}(n)$ defined above, in order to make it easier to compare our results to those in \cite{broker2012modular}.} Further, we set
\begin{gather*}
  \mathsf{M}(\FF_p, n)=O(\mathsf{M}(n(\log p+\log n))), \quad \mathsf{M}(\FF_q, n) = O(\mathsf{M}(\FF_p, en)),\\
  \mathsf{M}(R) = O(\mathsf{M}(\FF_q, m))
\end{gather*}
where $q=p^e$, and $\mathsf{M}(R,d)$ denotes the complexity of the multiplication of
two polynomials of degree~$d$ over the ring~$R$.
We also let $\mathsf{M}(R)=\mathsf{M}(R,1)$ the complexity of the multiplication of two
elements in $R$.
These bounds can be obtained from Kronecker substitution \cite{kronecker1882grundzuge}. 
Similarly, the multiplication of two polynomials of degree $O(d)$ over $R$ costs
\[
  \mathsf{M}(R,d) = O(\mathsf{M}(\FF_q, dm))=O(\mathsf{M}(m \,e\,d(\log p + \log(m \,e\,d)) )).
\] 
Moreover, we already note that later we will have $m \approx d \approx \ell$, $e \in \{1,2\}$ and $\log(p) \in O(\log(\ell))$ for some integer $\ell$, hence 
\begin{gather*}
  \mathsf{M}(R) = O(\mathsf{M}(\ell \log \ell))=O(\ell \log^2\ell \log\log \ell), \\ \mathsf{M}(R,\ell) = O(\mathsf{M}(\ell^2 \log \ell))=O(\ell^2 \log^2\ell \log\log \ell).
\end{gather*}

Inversions in $R$ can be computed with the same asymptotic complexity, for instance by using {\em Newton lifts}. Remarkably, this approach also allows us to lift roots of polynomials over $k$ to roots of polynomials over $R$. Since Newton lifts play an important role in our algorithms, this standard method is presented in Algorithm \ref{algo:Newton}. 

\begin{algorithm}[h!]
	\caption{\texttt{Newton\_lift}}\label{algo:Newton}
	\begin{flushleft}
	\textbf{Input:} An element $\alpha \in k$ and a polynomial $f \in R[x]$ such that $f(\alpha) \equiv 0$ and $f'(\alpha) \not\equiv0 \pmod{(\epsilon)}$.\\
	\textbf{Output:} An element $\tilde{\alpha} \in R$ with $\tilde{\alpha} \equiv \alpha \pmod{(\epsilon)}$ and ${f}(\tilde{\alpha}) = 0$. 
	\end{flushleft}
	\begin{algorithmic}[1]
		\For{$r \gets 1, \dots, \lceil\log_2(m+1)\rceil$}
		\State ${\alpha} \gets {\alpha} + O(\epsilon^{\min(2^r,m+1)})$
		\State ${\alpha} \gets {\alpha} - \frac{{f}({\alpha})}{{f}'({\alpha})}$
		\EndFor
		\State \Return ${\alpha}$
	\end{algorithmic}
\end{algorithm}

\subsection{Lifting torsion points}

Let $A$ be an abelian variety over $k$. Given a point $P \in A(k)$, and an $m$-th order deformation $\A$ of $A$, there exist multiple points $\tilde{P} \in \A(R)$ reducing to $P$. On the other hand, there is an isomorphism $\A[N] \cong A[N]$ if $p \nmid N$. In order to be able to lift isogenies, we need to make this isomorphism explicit. The case of elliptic curves is covered by Algorithm \ref{algo:lift_point} and the special case of abelian surfaces and $N=2$ is covered by Algorithm \ref{algo:lift_2torsion}. 

\begin{lemma}
	On input a point $P \in E[N]$ with $N$ coprime to $\char(k)$, $E$ an elliptic curve over $k$ and an $m$-th order deformation $\E$, Algorithm \ref{algo:lift_point} returns the unique lift $\tilde{P} \in \E[N]$ of $P$. For fixed $N$, the algorithm runs in time $O(1)$ over $R = k[\epsilon]/(\epsilon^{m+1})$.
\end{lemma}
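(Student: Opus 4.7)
The plan is to decompose the claim into uniqueness, correctness of the output, and the cost bound.

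Uniqueness is immediate from \cref{rem:group-strucutre}: since $\gcd(N,p)=1$ the group scheme $\E[N]/\Spec R$ is \'etale, and $R$ is Henselian, so the reduction map $\E[N](R) \to E[N](k)$ is a bijection. Any lift returned by the algorithm is therefore automatically the unique correct one, and our only task is to exhibit an effective procedure.

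For the explicit construction, I expect \cref{algo:lift_point} to first lift the $x$-coordinate $x_0$ of $P$ by applying \cref{algo:Newton} to the $N$-th division polynomial $\psi_N^{\E} \in R[X]$ of the deformed curve~$\E$. The key point is that $x_0$ is a \emph{simple} root of the reduction $\psi_N^E \in k[X]$: this simplicity is exactly the statement that the scheme of $x$-coordinates of nontrivial $N$-torsion points is \'etale over $k$, which holds because $\gcd(N,p)=1$. Newton iteration then yields a unique $\tilde x \in R$ reducing to $x_0$ with $\psi_N^{\E}(\tilde x)=0$. The $y$-coordinate is obtained by a second Newton lift applied to the Weierstrass equation viewed as a polynomial in $Y$, whenever $P$ is not $2$-torsion (so $y_0 \ne 0$ and the $Y$-derivative $2y_0+a_1x_0+a_3$ does not vanish). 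By construction $\tilde P = (\tilde x,\tilde y)$ reduces to $P$ and satisfies $\psi_N^{\E}(\tilde x)=0$, hence lies in $\E[N]$ and equals the unique lift.

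For the complexity, fix $N$; then $\psi_N$ has degree $O(N^2)$ bounded independently of $m$. Each step of \cref{algo:Newton} uses a constant number of operations in the truncated ring $k[\epsilon]/(\epsilon^{2^r+1})$, and the standard geometric-sum argument for quadratic Newton iteration shows that, since arithmetic cost grows (at least) linearly in the precision, the total work is dominated by the final step and amounts to $O(1)$ operations in $R$.

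The only delicate point I foresee is the $2$-torsion case: there $y_0=0$, the naive Hensel step for $Y$ degenerates, and one cannot lift by Newton in $Y$. This is resolved by noticing that $2$-torsion points are characterised by the \emph{linear} equation $2y+a_1x+a_3=0$, so once $\tilde x$ is computed, $\tilde y$ is read off directly without iteration. This is the only place where a separate branch in the algorithm is genuinely needed, and it does not affect the $O(1)$ bound.
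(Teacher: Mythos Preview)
Your proof is correct and follows essentially the same approach as the paper: Newton-lift the $x$-coordinate via the $N$-th division polynomial of $\E$, then Newton-lift the $y$-coordinate via the curve equation, with the complexity governed by \texttt{Newton\_lift}. You are in fact more careful than the paper's own argument, which neither justifies the simplicity of $x_0$ as a root of $\psi_N^E$ nor addresses the $2$-torsion case where the $Y$-derivative vanishes; your handling of that edge case via the linear equation $2y+a_1x+a_3=0$ is a genuine improvement.
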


\begin{proof}
	Since $\char{k} \nmid N$, there is a unique lift $\tilde{P} =(\tilde{x},\tilde{y})$ of $P=(x_0,y_0) \in E[N]$ which is also an $N$-torsion point. 
	Let $f_{\E,N}$ denote the $N$-th division polynomial of $\E$. Then $f_{\E,N}(x_0)\equiv 0 \pmod{(\epsilon)}$, and we can lift $x_0$ to a root $\tilde{x}_0$ of $f_{\E,N}$ using the method \texttt{Newton\_lift} (Algorithm \ref{algo:Newton}). 
	The corresponding $y$-coordinate $\tilde{y}_0$ lying above $y_0\in k$ is computed by another call to  \texttt{Newton\_lift}.
	
	It is clear that $\tilde{P} = (\tilde{x}_0,\tilde{y}_0)$ is in $\E[N]$. And the running time of the algorithm is determined by the running time of \texttt{Newton\_lift}.
\end{proof}

\begin{algorithm}[h!]
	\caption{\texttt{lift\_point}}\label{algo:lift_point}
	\begin{flushleft}
	\textbf{Input:} Elliptic curve $E$ over $k$, a point $P \in E(k)[N]$ with $p \nmid N$, and an $m$-th order deformation $\E : y^2 =f(x)$. \\
	\textbf{Output:} The lift $\tilde{P} \in \E[N]$ of $P$. 
	\end{flushleft}
	\begin{algorithmic}[1]
		\State $(x_0,y_0) \gets P$
		\State $f_{\E,N}$ the $N$-th division polynomial of $\E$.
		\State $\tilde{x}_0 \gets \texttt{Newton\_lift}(x_0, f_{\E,N})$
		\State $\tilde{y}_0 \gets \texttt{Newton\_lift}(y_0, y^2-f(\tilde{x}_0))$
		\State \Return $(\tilde{x}_0,\tilde{y}_0) \in \E[N]$
	\end{algorithmic}
\end{algorithm}

There are different ways to generalise Algorithm \ref{algo:lift_point} to higher dimensions. In our applications, we only need to lift $2$-torsion points of abelian surfaces. On decomposable abelian surfaces, we may just apply Algorithm \ref{algo:lift_point} on every component. Algorithm \ref{algo:lift_2torsion} describes a general method for lifting $2$-torsion points on abelian surfaces. For simplicity, we represent $2$-torsion points as pairs $P = \{\alpha_1,\alpha_2\} \in A(k)[2]$. If $A = \Jac(C)$, this means $P = [(\alpha_1,0) + (\alpha_2,0) - \infty_+ - \infty_-]$; and if $A = E_1 \times E_2$,  it means $P = \left((\alpha_1,0), (\alpha_2,0)\right)$. 

\begin{algorithm}[h!]
	\caption{\texttt{lift\_2\_torsion}}\label{algo:lift_2torsion}
	\begin{flushleft}
		\textbf{Input:} A principally polarised abelian surface $A$, a point $P \in A(k)[2]$, and an $m$-th order deformation $\A$.\\
		\textbf{Output:} The lift $\tilde{P} \in \A[2]$ of $P$. 
	\end{flushleft}
	\begin{algorithmic}[1]
		\State $\{\alpha_1,\alpha_2\} \gets P$
		\If{$A = \Jac(C)$ with $C:y^2=f(x)$}
		\State $\tilde{f} \gets $ a lift of $f$ with $\C:y^2 = \tilde{f}(x)$ and $\A = \Jac(\C)$
		\State $\tilde{f}_i \gets \tilde{f}$ for $i =1,2$
		\Else
		\State {$A = E_1 \times E_2$ with $E_1: y_1^2 = f_1(x_1)$ and $E_2: y_2^2 = f_2(x_2)$}
		\If{$\A = \E_1 \times \E_2$}
		\State $\tilde{f}_i \gets $ lift of $f_i$ with $\E_i: y_i^2 = \tilde{f}_i(x_i)$ for $i =1,2$
		\Else
		\State {$\A = \Jac(\C)$ with $\C: y^2 = \tilde{f}(x)$}
		\State $\tilde{f}_i \gets $ lift of $f_i$ with $\C: y_i^2 = \tilde{f}_i(x_i)$ for $i =1,2$
		\EndIf
		\EndIf
		\State $\tilde{\alpha}_i \gets \texttt{Newton\_lift}(\alpha_i, \tilde{f}_i)$ for $i =1,2$
		\State \Return $\{\tilde{\alpha}_1,\tilde{\alpha}_2\} \in \A[2]$
	\end{algorithmic}
	\end{algorithm}

\begin{lemma} \label{lem:lift-2tor}
	On input a point $P \in A[2]$, where $A$ is a principally polarized abelian surface over $k$,  and an $m$-th order deformation $\A$, Algorithm \ref{algo:lift_point} returns the unique lift $\tilde{P} \in \A[2]$ of $P$. The algorithm runs in time $O(1)$ over $R = k[\epsilon]/(\epsilon^{m+1})$.
\end{lemma}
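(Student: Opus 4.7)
The plan is to establish uniqueness from the étale structure of $\A[2]$ and then verify correctness of each branch of Algorithm \ref{algo:lift_2torsion}. For uniqueness, I would invoke Remark \ref{rem:group-strucutre}: since $p > 2$, the group scheme $\A[2]/\Spec(R)$ is étale, so the reduction map $\A[2](R) \to A[2](k)$ is a bijection and at most one lift $\tilde P$ of $P$ can exist.

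For correctness, I would handle the structural cases of the algorithm separately. In the case $A = \Jac(C)$, $\A = \Jac(\C)$, a $2$-torsion class is represented by a pair $\{\alpha_1, \alpha_2\}$ of distinct Weierstrass roots of $f$; the polynomial $\tilde f$ lifting $f$ is separable over $R$ since $\C$ is smooth, so Newton iteration (Algorithm \ref{algo:Newton}) uniquely lifts each $\alpha_i$ because $\tilde f'(\alpha_i) \not\equiv 0 \pmod \epsilon$. In the case $A = E_1 \times E_2$, $\A = \E_1 \times \E_2$, the problem decouples into two independent instances of Algorithm \ref{algo:lift_point} applied to the elliptic factors, whose correctness has just been established.

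The main work lies in the mixed case $A = E_1 \times E_2$ with $\A = \Jac(\C)$, where the product structure disappears under deformation. The sextic $\tilde f$ defining $\C$ reduces mod $\epsilon$ to the reducible polynomial $f_1 f_2$. Smoothness of each $E_i$ and of $\C$ forces $f_1$ and $f_2$ to be coprime, so Hensel's lemma provides a unique factorization $\tilde f = \tilde f_1 \tilde f_2$ with $\tilde f_i \equiv f_i \pmod \epsilon$; this is exactly what the algorithm uses when it writes $\C: y_i^2 = \tilde f_i(x_i)$ for $i=1,2$. Newton-lifting $\alpha_i$ inside $\tilde f_i$ then yields $\tilde\alpha_i$, and the unordered pair $\{\tilde\alpha_1, \tilde\alpha_2\}$ defines a $2$-torsion divisor on $\Jac(\C)$; by the uniqueness step above it must coincide with the étale-canonical lift of $P$ dictated by Remark \ref{rem:group-strucutre}.

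For the complexity, each Newton iteration doubles the $\epsilon$-precision, so by the standard geometric-cost analysis a call to Algorithm \ref{algo:Newton} on a polynomial of bounded degree costs $O(1)$ arithmetic operations in $R$ (with the convention, as in the previous lemma, that one $R$-multiplication at full precision is unit cost). The algorithm performs at most three such lifts on polynomials of degree at most $6$, so the total cost is $O(1)$ over $R$. The subtlest point to check carefully is the mixed case: namely that the implicit Hensel factorization $\tilde f = \tilde f_1 \tilde f_2$ is well-defined and that the lifted unordered pair really represents the unique étale-canonical lift of $P$, rather than some other $2$-torsion point of $\Jac(\C)$.
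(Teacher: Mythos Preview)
Your overall structure matches the paper's: the three-case split, the appeal to the étale nature of $\A[2]$ for uniqueness, and the Newton-lift justification are all as in the original. The first two cases are handled correctly.

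The gap is in your reading of the mixed case. You assert that the sextic $\tilde f$ defining $\C$ reduces modulo $\epsilon$ to the product $f_1 f_2$ of the two elliptic cubics, and that the algorithm's $\tilde f_i$ are the Hensel factors of $\tilde f$. This cannot be right: the paper states explicitly that the $\tilde f_i$ are of degree $5$ or $6$, so they are not cubic factors of a sextic. More fundamentally, your geometric picture fails: if $f_1 f_2$ were squarefree of degree $6$, the curve $y^2 = f_1(x) f_2(x)$ would be a smooth genus-$2$ curve with \emph{indecomposable} Jacobian, not $E_1 \times E_2$. So $\tilde f \pmod \epsilon$ cannot equal $f_1 f_2$, and there is no Hensel factorization to perform.

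What the paper actually intends is that the single curve $\C$ admits two different hyperelliptic models $\C: y_i^2 = \tilde f_i(x_i)$ over $R$, obtained via coordinate changes adapted to each elliptic factor; in these models the sextic $\tilde f_i$ has its top coefficients in $(\epsilon)$ and reduces to the cubic $f_i$. One then Newton-lifts $\alpha_i$ inside the appropriate model $\tilde f_i$. The paper itself is terse here and notes that interpreting the pair $\{\tilde\alpha_1, \tilde\alpha_2\}$ correctly requires keeping track of which model each coordinate lives in. You rightly flagged this case as the subtlest; the mechanism you proposed for it, however, does not work.
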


\begin{proof}	
	First assume that $A$ is indecomposable, i.e. $A = \Jac(C)$ with $C:y^2 = f(x)$. Then $\A = \Jac(\C)$ and $\C:y^2 = \tilde{f}(x)$ is a deformation of $C$. In particular, lifting the torsion point $P$ to $\Jac(\C)$ consists in lifting the roots $\alpha_1,\alpha_2$ of $f$ to roots of the polynomial $\tilde{f} \in R[x]$. Note that  we need to choose an equation $y^2 = \tilde{f}(x)$ so that $\tilde{f} \equiv f \pmod{(\epsilon)}$.\footnote{In our applications (e.g. Algorithm \ref{algo:Richelot-chain}), this is automatically the case.}
	
	On the other hand, if $A = E \times E'$, there are two cases to consider. If $\A = \E \times \E'$ is also a product of elliptic curves, then the situation is similar as above. We simply lift roots $\alpha_1,\alpha_2$ to roots of the defining polynomials for $\E$ and $\E'$ respectively. 
	
	The case where $\A=\Jac(\C)$ is indecomposable, but $A = E_1 \times E_2$ is a product of elliptic curves is more subtle. For $i \in \{1,2\}$, we write $E_i: y_i^2 = f_i(x_i)$. Note that there necessarily exists a relation between the coordinates $(x_1,y_1)$ and $(x_2,y_2)$. In particular,  there exist polynomials $\tilde{f}_i$ (of degree $5$ or $6$) in $k[\epsilon]/(\epsilon^{m+1})[x]$ such that the reduction modulo $(\epsilon)$ is equal to $f_i$. Consequently, the root $\alpha_i$ can be lifted to a root of $\tilde{f}_i$ using \texttt{Newton\_lift}. Note that in order to interpret the resulting representation $\{\tilde{\alpha}_1,\tilde{\alpha}_2\} \in \A[2]$ of the $2$-torsion point correctly, one needs to consider the polynomials $\tilde{f}_1$ and $\tilde{f}_2$.
\end{proof}

\subsection{Lifting an isogeny diamond} \label{subsec:lift-diamond}

Given an isogeny diamond over a field $k$ as in Definition \ref{def:isogeny-diamond}, there are two ways to lift it to $R$. The straight-forward method is to lift the individual elliptic curve isogenies to isogenies over $R$. 
Here, we describe a different method (Algorithm \ref{algo:lift_isogeny_diamond}), where the product isogeny  induced by Kani's Lemma is directly lifted to $R$. The key ingredient for this algorithm is Corollary \ref{cor:chi10}.

\begin{algorithm}[h!]
	\caption{\texttt{lift\_isogeny\_diamond}}\label{algo:lift_isogeny_diamond}
	\begin{flushleft}
		\textbf{Input:} Elliptic curves $E,E_{a},E_b, E_{ab}$ which are the vertices of an isogeny diamond, and a group $K \subset E \times E_{ab}$ which is the kernel of the corresponding product isogeny.
		An $m$-th order deformation $\E$ of $E$.\\
		Assertion: $j(E), j(E_a), j(E_b), j(E_{ab}) \notin \{0,1728\}$.\\
		\textbf{Output:} The lifted isogeny diamond $(\E,\E_a,\E_b,\E_{ab})$ over $R$.
	\end{flushleft}

	\begin{algorithmic}[1]
		\State $\E_{ab} \gets E_{ab}$, $\tilde{K} \gets K$
		\linecomment{The precision is doubled at each step.}
		\For{$r \gets 1, \dots, \lceil\log_2(m+1)\rceil$}
		\State $j_{ab} = j(\E_{ab})$
		\State $R \gets k[\epsilon]/\left(\epsilon^{\min(2^r,m+1)}\right)$
		\linecomment{We compute $\chi_{10}$ for two different lifts of $E_{ab}$.}
		\For{$i = 0,1$}
		\State $j_i \gets R(j_{ab}) + i \cdot \epsilon^{2^{r-1}}$
		\State $\E_{i} \gets$ \texttt{deformation}$(E_{ab})$ with $j(E_{i}) = j_i$ over $R$.
		\State $K_i\gets$ \texttt{lift}$(\tilde{K}) \subset (\E \times \E_i)[N]$.
		\State $A_i \gets (\E \times \E_i)/K_i$.
		\State $\delta_i \gets \chi_{10}(A_i)$
		\EndFor
		\linecomment{The correct lift of $\E_{ab}$ is deduced from $\delta_0,\delta_1$.}
		\State $j_{ab} \gets j_{ab} - \frac{\delta_0}{\delta_1 - \delta_0}$
		\State $\E_{ab} \gets$ \texttt{deformation}$(E_{ab})$ with $j(\E_{ab}) = j_{ab}$
		\State $\tilde{K} \gets$ \texttt{lift}$(\tilde{K}) \subset (E \times E'')[N]$
		\EndFor
		\linecomment{We compute the lifted product isogeny.}
		\State $\E_a \times \E_b \gets (\E \times \E_{ab})/\tilde{K}$. \label{line:last-step}
		\State \Return $(\E_a,\E_b,\E_{ab})$
	\end{algorithmic}
\end{algorithm}

\begin{proposition}
	Let  $E,E_{a},E_b, E_{ab}$ be elliptic curves which are the vertices of an isogeny diamond, and let $K \subset E \times E_{ab}$ the kernel of the product isogeny induced by Kani's Lemma. Further assume that none of the elliptic curves have extra automorphisms, that is we assume $j(E), j(E_a), j(E_b), j(E_{ab}) \notin \{0,1728\}$. On input this data together with an $m$-th order deformation $\E$ of $E$, Algorithm \ref{algo:lift_isogeny_diamond} outputs the deformations $(\E_a,\E_b,\E_{ab})$ such that $F$ lifts to an isogeny $\tilde{F}:\E \times \E_{ab} \to \E_a \times \E_b$.
\end{proposition}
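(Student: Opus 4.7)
The plan is to interpret Algorithm \ref{algo:lift_isogeny_diamond} as a Newton iteration whose fixed point is the deformation $\E_{ab}$ singled out (up to isomorphism) by Corollary \ref{cor:unique-lift-isogeny-diamond}. By that corollary, there is a unique $\E_{ab}$ such that $F$ lifts as a product isogeny $\tilde{F} : \E \times \E_{ab} \to \E_a \times \E_b$ for some deformations $\E_a, \E_b$ of $E_a, E_b$. Corollary \ref{cor:chi10} reduces the task to finding $\lambda_{ab} \in (\epsilon)$ solving $h(\lambda, \lambda_{ab}) = 0$, where $\lambda = j(\E) - j(E)$, $\lambda_{ab} = j(\E_{ab}) - j(E_{ab})$ and $h$ is the polynomial from that corollary; once $\lambda_{ab}$ is known to full precision, the final line of the algorithm returns the codomain $(\E \times \E_{ab})/\tilde{K}$, which splits as $\E_a \times \E_b$ precisely because $\chi_{10}\equiv 0$.

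Next I would verify the non-degeneracy required for Newton's method, namely that $\partial h / \partial \lambda_{ab}$ is a unit at $(0,0)$. The diamond supplies an isogeny $\phi : E \to E_{ab}$ (for instance $\phi = g' \circ f$), and exactly as in the proof of Proposition \ref{prop:deformation-parameters} the étaleness of both projections of the associated modular correspondence yields a formal isomorphism $\hat{\Am}_{1, E} \to \hat{\Am}_{1, E_{ab}}$. Under the hypothesis $j(E), j(E_{ab}) \notin \{0, 1728\}$, these completions are parameterized by single variables $\lambda$ and $\lambda_{ab}$ (Example \ref{exa:deformation-parameters}), so the isomorphism forces the implicit solution $\lambda_{ab}(\lambda)$ of $h = 0$ to have nonzero linear term; equivalently, $\partial h / \partial \lambda_{ab}(0,0)$ is a unit.

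The correctness then follows by induction on $r$: after the $r$-th pass of the outer loop the current $\E_{ab}$ agrees with the true deformation modulo $\epsilon^{\min(2^r, m+1)}$. The inductive hypothesis gives $\delta_0 \in (\epsilon^{2^{r-1}})$, and the previous paragraph gives $\delta_1 - \delta_0 \equiv \epsilon^{2^{r-1}}\cdot (\partial h / \partial \lambda_{ab}) \pmod{\epsilon^{2^r}}$ with invertible leading coefficient. Since $(\epsilon^{2^{r-1}})^2 = 0$ in $R$, the Taylor expansion $h(\lambda, \lambda_0 + t\,\epsilon^{2^{r-1}}) = \delta_0 + t(\delta_1 - \delta_0)$ is exact, so solving for the root $t$ yields the Newton correction and doubles the precision. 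The auxiliary subroutines---lifting the kernel via Algorithm \ref{algo:lift_2torsion}, computing the $(2,2)$-isogeny quotient by a Richelot chain with glue and split steps, and evaluating $\chi_{10}$---are all valid over $R$ by Remark \ref{rem:group-strucutre} and the good reduction of the Riemann relations over $\Z[1/(2\ell p)]$.

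The main obstacle I anticipate is making precise the formal division $\delta_0/(\delta_1 - \delta_0)$ inside $R$: both numerator and denominator lie in the ideal $(\epsilon^{2^{r-1}})$, which is not invertible, so the ratio must be defined by cancelling the common factor $\epsilon^{2^{r-1}}$ to obtain well-defined representatives in $R/(\epsilon^{2^{r-1}})$---the denominator now being a unit by non-degeneracy---and then reinterpreting the resulting element as a correction in $R$ of order $\epsilon^{2^{r-1}}$. Verifying that the working precision $\epsilon^{2^r}$ is sufficient for each inner subroutine to return outputs whose truncations match those of the intended deformation, and tracking this precision across the doubling loop, is the remaining technical work.
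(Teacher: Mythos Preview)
Your approach is essentially the paper's: reduce, via Corollary~\ref{cor:chi10}, to locating the root of $h(\lambda,\cdot)$ in $\lambda_{ab}$, then recover that root by a two--evaluation secant/Newton step that doubles the $\epsilon$--adic precision at each pass; once $\lambda_{ab}$ is known to full precision, the last line of the algorithm produces the split codomain. You are more careful than the paper in spelling out the precision bookkeeping and in flagging that the quotient $\delta_0/(\delta_1-\delta_0)$ must be interpreted after cancelling the common factor $\epsilon^{2^{r-1}}$---the paper leaves both points implicit.

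There is, however, a genuine gap in your non--degeneracy paragraph. From the \'etale modular correspondence attached to $\phi:E\to E_{ab}$ you correctly deduce that the zero $\lambda_{ab}(\lambda)$ of $h(\lambda,\cdot)$ is a formal isomorphism, hence has unit linear term. But this is \emph{not} equivalent to $\partial h/\partial\lambda_{ab}(0,0)$ being a unit: a function can vanish to order $\geq 2$ along a smooth curve, as in the toy example $h=(\lambda_{ab}-\lambda)^2$, whose zero locus has unit linear term while $\partial h/\partial\lambda_{ab}(0,0)=0$. What the Newton step actually requires is that $h$---equivalently, $\chi_{10}$ pulled back through the \'etale correspondence $\corresp$ to the $2$--parameter family of product deformations of $E\times E_{ab}$---vanishes to order exactly~$1$ along its zero locus. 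That is a statement about the divisor of the splitting criterion on the \emph{codomain} side in $\hat{\Am}_{2,E_a\times E_b}$, and it cannot be read off from the one--dimensional correspondence $\hat{\Am}_{1,E}\to\hat{\Am}_{1,E_{ab}}$ you invoke. The paper's own proof does not address this point either, so your argument is not weaker than the paper's here; but since you explicitly assert that the partial derivative is a unit, you should supply an independent reason for simple vanishing (for instance, by replacing $\chi_{10}$ with a quantity---such as the Richelot determinant used as the splitting detector in the implementation---that vanishes simply on the product locus).
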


\begin{proof}
	The main part of the algorithm consists in computing the correct deformation of $E_{ab}$. Once this is done, we can lift the kernel $K$ to a subgroup of $\E \times \E_{ab}$, and then $\E_a \times \E_b$ is computed as the codomain of the product isogeny with this kernel (Line \ref{line:last-step}). 

	To understand  the main part, recall from Corollary \ref{cor:chi10} that there exists a polynomial $h \in k[x_1,x_2]$ with the property that 
	\[
	h(j(\E) -j(E), j(\E') - j(E_{ab})) = \chi_{10}(\A),
	\] 
	for any arbitrary deformation $\E'$ of $E_{ab}$ and where $\A$ is the corresponding codomain of the lifted isogeny $F:\E \times \E' \to \A$. Our goal is to find the correct deformation $\E' = \E_{ab}$ so that $\A$ is again decomposable, i.e. $\chi_{10} = 0$.  If we knew the polynomial $h$, we could simply apply the Newton method to find the correct value for $j(\E_{ab})$. Since this is not the case, we use an interpolation based approach, where we evaluate $h$ at two different values of $j(\E')$. Similar as in Newton's method, this approach lets us double the precision at each step.  
	
	To describe the idea in more detail, assume we are at Step $r$ of the algorithm. Let $E_{ab,{r-1}}$ be the correct deformation of order $2^{r-1}-1$, that is 
	\[
	h(j(\E) -j(E), j(E_{ab,r-1}) - j(E_{ab})) \equiv 0 \pmod{(\epsilon^{2^{r-1}})}.
	\]
	Now, we compute two different deformations of $E_{ab}$ of order $2^r-1$, and denote them by $E_0$ and $E_1$. More precisely, we choose the lifts with $j$-invariants 
	\[
	j(E_0) = j(E_{ab,r-1}), \quad j(E_1) = j(E_{ab,r-1}) + \epsilon^{2^{r-1}}.
	\]
	Note that we can compute arbitrary deformations, since $j(E_{ab}) \neq 0 ,1728$ by assumption (cf. Example \ref{exa:deformation-parameters}).
	For both cases $i = 0,1$, we lift the kernel $K$ to a subgroup $K_i \subset \E \times E_i$ using the method \texttt{lift\_point} described in Algorithm \ref{algo:lift_point}. Then we compute the codomains  $A_i = (\E \times E_i)/K_i$ of the resulting isogenies, and $\delta_i = \chi_{10}(A_i)$. With $j(E_{ab,r}) =j(E_{ab,r-1}) -  \delta_0/(\delta_1-\delta_0)$, we find that  
	\[
	h(j(\E) -j(E), j(E_{ab,r}) - j(E_{ab})) \equiv 0 \pmod{(\epsilon^{2^{r}})}.
	\]
	After computing the data corresponding to this deformation, one can proceed to Step $r+1$. 
\end{proof}

\subsection{Lifting (smooth-degree) isogenies} \label{subsec:lift-smooth-isogeny}

Given an isogeny $f:A\to A'$ and a deformation $\A$ of $A$, Proposition \ref{prop:lift-isogenies}  shows that there exists a unique lift $\tilde{f}:\A \to \A'$. 
Let $G \subset A[N]$ be the kernel of $f$. The straight-forward approach for lifting $f$ consists in lifting the kernel generators, for instance using Algorithms \ref{algo:lift_point}, in order to obtain a lifted kernel $\tilde{G} \subset \A[N]$. Then the isogeny can be computed using standard algorithms.
This is the approach used in Algorithm \ref{algo:lift_isogeny_diamond}. 

However, if $N$ is composite the above approach is not optimal. For instance, in the case $N = 2^n$, an isogeny can be lifted in time $O(n)$ (provided some auxiliary data from the original isogeny), while the naive method has complexity $O(n\log(n))$. To keep everything explicit, we restrict to the case of $(2^n,2^n)$-isogenies. 

\begin{algorithm}
	\caption{\texttt{lift\_2\_2\_chain}}\label{algo:Richelot-chain}
	\begin{flushleft}
		\textbf{Input:} A $(2^n,2^n)$-isogeny $f = f_n \circ \dots \circ f_1 : A_0 \to A_n$ over $k$, the data $\ker(f_i) = \langle P_{1,i}, P_{2,i}\rangle$ for each $i$, and an $m$-th order deformation $\A_0$ of $A_0$.\\
		\textbf{Output:} The deformation $\A_n$ so that $f$ lifts to an isogeny $\tilde{f}:\A_0 \to \A_n$. 
	\end{flushleft}
	\begin{algorithmic}[1]	
		\For{$i = 1, \dots, n$}
		\State $\tilde{P}_{j,i} \gets$ \texttt{lift\_2\_torsion} $(A_{i-1},P_{j,i},\A_{i-1})$ for $j = 1,2$
		\State $\A_i~~ \gets \texttt{2\_2\_isogeny}(\A_{i-1}, \langle \tilde{P}_{1,i}, \tilde{P}_{2,i} \rangle)$
		\EndFor\\
		\Return $\A_n$
	\end{algorithmic}
\end{algorithm}

\begin{lemma}
	Given a $(2^n,2^n)$-isogeny $f = f_n \circ \dots \circ f_1 : \A_0 \to \A_n$ over a field  $k$ together with the data $\ker(f_i) = \langle P_{1,i}, P_{2,i}\rangle$ for each $i \in \{1,\dots,n\}$, and an $m$-th order deformation $\A_0$ of $A_0$, Algorithm \ref{algo:Richelot-chain} outputs a deformation $\A_n$ of $A_n$ so that  $f$ lifts to an isogeny $\tilde{f}:\A_0 \to \A_n$. The algorithm runs in time $O(n)$ over $R = k[\epsilon]/(\epsilon^{m+1})$.
\end{lemma}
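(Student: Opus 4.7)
The plan is to argue correctness by a straightforward induction on the chain length, and to bound the cost by summing a constant number of $R$-operations over the $n$ steps.

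For correctness, I would maintain the loop invariant that after iteration $i$, the object $\A_i$ produced is the unique deformation of $A_i$ to $R$ such that the composition $f_i \circ \dots \circ f_1$ lifts to an isogeny $\A_0 \to \A_i$. The base case $i=0$ is trivial since $\A_0$ is given. For the inductive step, Proposition \ref{prop:lift-isogenies} applied to the separable isogeny $f_i \colon A_{i-1} \to A_i$ together with the deformation $\A_{i-1}$ of $A_{i-1}$ guarantees that there is a unique deformation $\A_i$ of $A_i$ and a lift $\tilde{f}_i \colon \A_{i-1} \to \A_i$ of $f_i$. Concretely, the proof of that proposition tells us that $\A_i = \A_{i-1}/\tilde{G}_i$, where $\tilde{G}_i \subset \A_{i-1}[2]$ is the unique lift of $\ker(f_i) \subset A_{i-1}[2]$. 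This is exactly what the algorithm computes: Lemma \ref{lem:lift-2tor} ensures that the call \texttt{lift\_2\_torsion} returns the unique lifts $\tilde{P}_{1,i}, \tilde{P}_{2,i} \in \A_{i-1}[2]$ of the generators of $\ker(f_i)$, and these generate $\tilde{G}_i$ by Remark \ref{rem:group-strucutre} (the étale $2$-torsion group scheme $\A_{i-1}[2]$ is canonically identified with $A_{i-1}[2]$). Applying the standard $(2,2)$-isogeny formula to this kernel — which is valid over $R$ as discussed in Remark \ref{rem:group-strucutre}, since $p \neq 2$ — then yields $\A_i$. Composing the lifts $\tilde{f}_n \circ \dots \circ \tilde{f}_1$ gives the desired lift $\tilde{f} \colon \A_0 \to \A_n$ of $f$.

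For the complexity, each iteration consists of exactly two calls to \texttt{lift\_2\_torsion} and one call to \texttt{2\_2\_isogeny}. By Lemma \ref{lem:lift-2tor}, each of the torsion lifts costs $O(1)$ operations in $R$. The $(2,2)$-isogeny computation (Richelot's formulas in the generic case, together with the gluing/splitting formulas in the decomposable cases) involves only a fixed number of field operations applied to the input coordinates, hence also costs $O(1)$ operations in $R$. Summing over the $n$ iterations yields the claimed $O(n)$ operations in $R$.

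The only mildly delicate point is to verify that the $(2,2)$-isogeny formulas used inside the loop really do make sense over the Artinian ring $R$, and in particular that no division by a non-unit occurs during Richelot's construction or during the gluing/splitting steps. The non-vanishing of the relevant discriminants follows from the non-degeneracy of the corresponding quantities on the special fibre $A_{i-1}$ — which is guaranteed by the fact that $f_i$ is a well-defined isogeny over $k$ — together with the observation that an element of $R$ which reduces to a unit in $k$ is itself a unit, since $R$ is local with residue field $k$. Everything else is routine.
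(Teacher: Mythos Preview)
Your proposal is correct and follows essentially the same approach as the paper's own proof, which is very terse: correctness because Lemma~\ref{lem:lift-2tor} guarantees the kernel generators are lifted correctly at each step, and $O(n)$ complexity because the \texttt{2\_2\_isogeny} black box runs in $O(1)$ over~$R$. Your version is simply a more detailed elaboration of the same argument, with the added (and welcome) care of checking that the Richelot/gluing/splitting formulas remain valid over the local Artinian ring~$R$.
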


\begin{proof}
	The isogeny chain computed in Algorithm \ref{algo:Richelot-chain} is correct, since at each step the kernel generators are lifted correctly as per Lemma \ref{lem:lift-2tor}. The method \texttt{2\_2\_isogeny} is used as a black box. It is only important that it runs in time $O(1)$ over $R$.	
\end{proof}

\begin{remark}
  \label{rem:radical}
  An even faster method to lift a smooth degree isogeny is to use modular
  correspondences. For instance, a $2^n$-isogeny over $k$ is given by a
  sequence of modular points which are solutions of the modular
  correspondance $\correspbis$, and lifting the isogeny corresponds to
  lifting this sequence of points.
  In practice, since we want to be able to evaluate modular forms, we need
  to work with normalised isogenies, hence use a version of $\correspbis$ which
  keeps track of our differentials.

  For instance, Richelot isogenies are well suited to write explicit modular
  equations. Explicit formulas are given by \enquote{multiradical isogeny}
  formulas \cite[\S~4.2]{castryck2021multiradical} which describe the modular correspondence
  $\correspbis: \Am_2(\Gamma_1(4)) \to  \Am_2(\Gamma'_1(4)) \to \Am_2(\Gamma_1(2)) \times
  \Am_2(\Gamma_1(2))$ (see \cite[\S~3.1]{castryck2021multiradical} for the
  notations).
  The theta duplication formula is also naturally $2$-radical in
  dimension~$2$.
  Using one of these radical $2$-isogeny formulas, lifting each such $2$-isogeny
  in dimension~$2$  amounts to lifting (via Newton) three square roots. In our implementation, we follow this approach.
  It has the advantage that it bypasses the need to lift torsion
  points and evaluate isogeny formulas on these lifted points.
  Another potential (constant-time) speed-up could be obtained by using $4$-radical isogeny formulas instead.
\end{remark}

\subsection{Deforming a general isogeny}
\label{subsec:generaldeformation}
In this section we prove the general statement from \cref{thm:main2}.
We are given an efficient representation of an isogeny $f: E_1 \to E_2$,
and we want to lift it to $\tilde{f}: \E_1 \to \E_2$
for some given $m$-th order deformation $\E_1/R$.

\smallskip
\textbf{Initialisation}: We fix $N>\ell$ with $N$ prime to $\ell$ and $p$, and such that the $N$-torsion is accessible
(which means that its Sylow subgroups lie in small extensions of the base
field), for instance $N$ is powersmooth or $N=2^n$ if the $2^n$-torsion of $E$
 is rational.
 We write $N-\ell = a^2+b^2+c^2+d^2$, and consider $\alpha$, the $4 \times 4$ quaternion matrix with norm $N-\ell$. Then $\alpha$ induces an endomorphism on $E_1^4$ and $E_2^4$.
 We can construct the isogeny diamond
	\begin{center}
		\begin{tikzcd}
			E_1^4  \arrow[r, "f"] \arrow[d, "\alpha"] & E_2^4  \arrow[d, "\alpha"] \\
			E_1^4  \arrow[r, "f"]&  E_2^4.
		\end{tikzcd}
	\end{center}
which allows us to embed $f$ into an $8$-dimensional $N$-endomorphism
$F: E_1^4 \times E_2^4 \to {E}_1^4 \times {E}_2^4$.
By assumption, we have an efficient representation of $f$, hence we can evaluate
it on the $N$-torsion in order to compute the HD representation $F$  of $f$.
See \cite{EC23:Robert} for more details.

Since $N$ is taken to be smooth, we can decompose $F$ as a product of
small degree isogenies. In theory, $F$ can be computed in the theta
model, using the isogeny algorithm of \cite{DRfastisogenies},
working with level~$4$ theta functions, and representing our
intermediate abelian varieties by their theta constants. The codomain of
$F$ is equal to $E_1^4 \times E_2^4$ but it may have different theta
constants than the domain (because the level $\Gamma(4,8)$-theta structure
needs not be preserved), so we apply a symplectic change of basis at the
end to get matching theta constants. A way to compute this matrix is
described in \cite[Appendix~F]{sqisignhd}.

\smallskip
\textbf{Newton iterations}:
We then proceed by a Newton iteration, doubling the precision~$m$ at each
stage. For simplicity we explain how to go from $m=1$ to $m=2$ here, the
general case being similar.

Let $\tilde{f}: \E_1 \to \E_2$ be the deformation of $f$ to
$\E_1$. The isogeny diamond above certainly deforms (with the same
matrix $\alpha$), and so $F$ deforms to an endomorphism $\tilde{F}$ of $\E_1^4 \times  \E_2^4$.
On the other hand, if we take an arbitrary lift $\E_2'$ of $E_2$ to
$R$, the corresponding deformation $\tilde{F}'$ is only an isogeny.
In fact, if $\tilde{F}'$ is an endomorphism, then it is given by a
matrix, which reduces to the matrix giving $F$ (by unicity of
deformations), hence contains the deformation $\tilde{f}$.

In summary, $\E_2'$ is the correct codomain of the deformation of
$f$ if and only if
the codomain of $\tilde{F}$ is equal to the domain. Using modular
invariants $J$ (e.g. the theta constants), this can be tested via the
equality $J(\E_1^4 \times
\E_2'^4)=J(\mathrm{codomain}{(\tilde{F})})$.
By the same arguments as in \cref{sec:deformations-theory}, in particular \cref{cor:chi10}, the left and
right member of this equality are analytic in terms of the deformation
parameter $\lambda$ of $\E_2$ (i.e. are given by power series).
Since we are doing a Newton iteration, everything becomes linear, so we just
need to interpolate between two different evaluations of the deformation
parameter $\lambda$ to solve the equation.
For each of these two choices of $\lambda$, we need to compute the
deformation of $\tilde{F}$ to recover its codomain.

\textit{Computing $\tilde{F}$}:
We proceed as in \cref{subsec:lift-smooth-isogeny}.
Since $F$ is decomposed into a product of small isogenies over $\Fq$, we
deform these isogenies step by step.
The easiest way is to deform the points in the kernel and apply the theta
isogeny algorithm.

\textit{Deforming a point $P$ of $N$-torsion}:
We also proceed by a Newton iteration.
Division polynomials are not easy to compute in higher dimension.
Instead, we just rely on the fact that there is a unique deformation
$\tilde{P}$ of $P$ which is still of $n$-torsion, and that taking an
arbitrary deformation $\tilde{P}'$ we can efficiently compute $n \cdot 
\tilde{P}'$. In other words, it is easy  to evaluate the $n$-division
polynomials, which is enough for the Newton iteration.

\textbf{Complexity}:
Since we double the precision $m$ at each step, 
and the arithmetic of $R$ is super-linear
in terms of $m$, the last Newton step is dominating. 

The dominant step is in the computation of the deformation $\tilde{F}$;
it consists in deforming $O(\log \ell)$ isogenies of degree $O(\log \ell)$
in the worst case (e.g. when $N$ is powersmooth).
The algorithm cost is thus polynomial in $\log \ell$ arithmetic operations (for deforming the generators of the kernels and then computing the isogeny) in
$R$ at precision~$m$.

In the best case, we can take $N=2^n$ and the $N$-torsion is rational in
$E_1$, so we just need to deform $O(\log \ell)$ $2$-isogenies.

	\section{Computing modular polynomials over finite fields} \label{sec:modular-Fp}

In this section, we present algorithms for computing the modular polynomial $\varphi_\ell(X,Y)$ over $\FF_p$.\footnote{We use the notation $\varphi_\ell$ for the modular polynomial over a finite field, and $\Phi_\ell$ for the modular polynomial with coefficients in $\ZZ$.} We describe very explicit versions of the algorithm for a family of primes $p$ depending on $\ell$. These methods will be used in our implementation for computing the modular polynomial over $\QQ$. In addition, \cref{subsec:generalcase} discusses the generalization to arbitrary primes.

In all cases, the main ingredient for our procedure is an algorithm to lift isogeny diamonds over $\FF_p$ to isogeny diamonds over $\FF_p[[\epsilon]]$ with precision at least $\ell+2$. The case distinction is necessary, since we construct $(\ell,d)$-isogeny diamonds with $\ell+d$ smooth, which depends on the value of $\ell$.

\subsection{Suitable primes} \label{subsec:suitable-primes}
Depending on  $\ell \pmod{4}$, we define a set of primes $\P_{\ell}$ for which our algorithm can compute the modular polynomial over $\FF_p$. 
For an odd prime $\ell$, we set
\begin{align*}
	\P_\ell = \{ p > 11 \text{ prime}: \exists~ n,a,b \text{ with } 2^n - c_\ell \cdot \ell = a^2 + 4b^2 \text{ and } 2^n \cdot c_\ell \cdot \ell \mid p+1\},
\end{align*}
where 
\[
c_\ell = \begin{cases}
	1 & \text{if } \ell \equiv 3 \pmod{4}, \\
	3 & \text{if } \ell \equiv 1 \pmod{4}.
\end{cases}
\]

\begin{heuristic} \label{heuristic:suitable-n}
	Let $\ell$ be a positive integer and $c_\ell$ as defined above. Then we expect
	\[
	\# \{n \in \NN \mid 2^n - c_\ell \cdot \ell = a^2+4b^2, \; n \leq x \} 
	\approx \frac{x}{\sqrt{x}}.
	\]

\end{heuristic}

Under Heuristic \ref{heuristic:suitable-n}, we expect to find a value $n \in \O(\log(\ell))$. The intuition behind this heuristic and our choice for $c_\ell$ is explained in the remark below.

\begin{remark}
	Recall that an element $z \in \NN$ can be written as a sum of two squares, $z = a^2+b^2$, if and only if its prime decomposition $z = \prod_i p_i^{k_i}$ contains no prime factor $p_i = 3 \pmod{4}$ with $k_i$ odd. In particular, an element $z \equiv 3 \pmod{4}$ cannot be written as a sum of two squares.  Now $c_\ell \in \{1,3\}$ is chosen so that this necessary congruence condition is satisfied for all $z = 2^n - c_\ell \cdot \ell$. 
	
	Note that in our setting, we need to write $z = a^2 + 4b^2$. However, we are always in the case that $z$ is odd, hence this is equivalent to $z$ being a sum of two squares.	
	The fraction of elements smaller than some $x$ that can be written as a sum of two squares is known to be in  $O\left(1/\sqrt{\log(x)}\right)$, hence
	\[
	\# \{z = a^2 + 4b^2 \mid z \leq x\} \approx \frac{x}{\sqrt{\log(x)}}.
	\]
	Essentially, Heuristic \ref{heuristic:suitable-n} means that we expect the elements of the form $a^2 + 4b^2$ to be uniformly distributed among the numbers of the form  $2^n - c_\ell \cdot \ell$ for varying $n \in \NN$.
\end{remark}

Under Heuristic \ref{heuristic:suitable-n}, we obtain that there are $\O(\ell)$ primes $p \in P_\ell$ that have about the same logarithmic size as $\ell$, i.e. $\log(p)$ in $\O(\log(\ell))$. A more precise estimate is provided in the lemma below.

\begin{lemma} \label{lem:primes-in-Pl}
	Let $\ell$ be a positive integer and $\P_{\ell}$ as defined above. 
	Then 
	\[
	\# \{ p \in P_\ell \mid p \leq x\} \gtrapprox \frac{x}{2^n  \ell  \log(x)}.
	\]
	More precisely, the $m$-th prime in $P_{\ell}$ is bounded by
	$(2^n \ell)^L m \log(m)^2$, where $L \leq 5$ is Linnik's constant.
	If Heuristic \ref{heuristic:suitable-n} holds,  the bound is
	$(\ell)^{L'} m \log(m)^2$, for some $L'$.
\end{lemma}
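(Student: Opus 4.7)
The plan is to reduce the counting to primes in an arithmetic progression. Fix an integer $n$ satisfying the sum-of-squares condition $2^n - c_\ell \ell = a^2 + 4 b^2$, and set $q = 2^n c_\ell \ell$. By the very definition of $\mathcal{P}_\ell$, every prime $p > 11$ with $p \equiv -1 \pmod q$ belongs to $\mathcal{P}_\ell$. Since $\gcd(-1, q) = 1$, it suffices to estimate
\[
\pi(x; q, -1) := \#\{p \leq x \text{ prime} : p \equiv -1 \pmod q\}.
\]

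For the first asymptotic, I would invoke the prime number theorem for arithmetic progressions, which yields $\pi(x; q, -1) \sim x/(\varphi(q) \log x)$. Bounding $\varphi(q) \leq q = 2^n c_\ell \ell$ and absorbing $c_\ell \in \{1,3\}$ into the implied constant gives the claimed lower bound $\#\{p \in \mathcal{P}_\ell : p \leq x\} \gtrapprox x / (2^n \ell \log x)$.

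For the bound on the $m$-th prime, I would combine two inputs. First, Linnik's theorem guarantees that the \emph{smallest} prime $p_1 \equiv -1 \pmod q$ satisfies $p_1 \leq q^L$ where $L \leq 5$. Second, inverting the prime number theorem for arithmetic progressions, once $x$ exceeds $p_1$, the $m$-th prime in the progression is bounded by a constant multiple of $\varphi(q) \cdot m \log(\varphi(q) m)$. A unified bound across both regimes (small $m$, where Linnik dominates; large $m$, where the density estimate dominates) has the shape $q^L \cdot m \log(m)^2$, which is precisely $(2^n \ell)^L m \log(m)^2$ up to the constant $c_\ell$. Finally, under Heuristic \ref{heuristic:suitable-n}, one may pick $n = O(\log \ell)$, so $2^n$ is polynomial in $\ell$, and $(2^n \ell)^L$ collapses to $\ell^{L'}$ for some $L' = L \cdot (1 + O(1))$.

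The main technical obstacle is making the $m$-th prime bound uniform: Linnik's theorem controls only the first prime, and for intermediate values of $m$ one needs an effective version of the Siegel--Walfisz theorem (or equivalently a log-free zero-density estimate of Linnik type) that remains valid for moduli $q$ that may be large relative to $x$. These tools are classical, so no new analytic number theory is needed; the work lies in bookkeeping the dependence on $q$ and $m$ to obtain the precise form $(2^n \ell)^L m \log(m)^2$.
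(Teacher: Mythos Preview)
Your proposal is correct and follows essentially the same route as the paper: fix an admissible $n$, reduce to counting primes $p\equiv -1 \pmod{2^n c_\ell \ell}$, apply the prime number theorem in arithmetic progressions for the density, and invoke a Linnik-type result for the $m$-th prime. The only difference is packaging: where you decompose the $m$-th prime bound into Linnik for $m=1$ plus an inverted Siegel--Walfisz estimate for larger $m$, the paper simply cites the bound $p_m \le A^L m(\log m)^2$ as a known consequence of Linnik's theorem (specifically \cite[Corollary~18.8]{iwaniec2021analytic}), so your ``technical obstacle'' is already absorbed into that reference.
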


\begin{proof}
	We look at the smallest integer so that there exist integers $a,b$ with $2^n - c_\ell \cdot \ell = a^2 + 4b^2$. For this triple $(n,a,b)$, we can sieve through the primes to find those which satisfy 
	\[
	p \equiv -1 \pmod{2^n \cdot c_\ell \cdot \ell}.
	\] 
	It follows from the {\em prime number theorem for arithmetic progressions} that the proportion of primes of this form is $1/\varphi(2^n \cdot c_\ell \cdot \ell) \approx 1/\ell^2$ (with $\varphi$ denoting Euler's totient function). 

        For the more precise statement, we need an upper bound on the
        smallest suitable prime.
We need to find primes congruent to $-1$ modulo 
$A=2^n \cdot \ell$. By Linnik's theorem \cite[Corollary~18.8]{iwaniec2021analytic}, the $m$-th such prime is bounded by $A^L m
\log(m)^2$ where $L\leq 5$ is Linnik's constant.
Under Heuristic \ref{heuristic:suitable-n}, we have $2^n \leq
\ell^{\const}$ for some constant $\const$, which concludes the proof.
\end{proof}

\subsection{The case $\ell \equiv 3 \pmod{4}$} \label{subsec:modular-poly-a}

Here, we present an algorithm for computing the modular polynomial $\varphi_\ell$ over $\FF_p$ when $\ell \equiv 3 \pmod{4}$ for primes $p$ in the set $\mathcal{P}_\ell$.

\begin{algorithm}[h!]
	\caption{\texttt{modular\_polynomial\_modp} ~ $\ell \equiv 3 \pmod{4}$ }\label{algo:modp}
	\begin{flushleft}
	\textbf{Input:} A prime  $\ell \equiv 3 \pmod{4}$, and a prime $p\in \mathcal{P}_\ell$.\\
	\textbf{Output:} The modular polynomial $\varphi_\ell(X,Y) \in \FF_p[X,Y]$.
	\end{flushleft}
	\begin{algorithmic}[1]
		\linecomment{Setup}
		\State Set $E: y^2 = x^3 + 6x^2 + x$ over $\FF_{p^2}$ 
		\State $\iota \gets \iota \in \End(E)$ with $\iota \circ {\iota} = [-4]$
		\State $\gamma \gets [a] + [b] \iota$, where $2^n - \ell = a^2 + 4b^2$ for some $n$. \label{line:definition-gamma}
		\State Compute $P_{2^n}, Q_{2^n}$ with $E[2^n] = \langle P_{2^n}, Q_{2^n} \rangle$
		\State Compute $P_\ell, Q_\ell$ with $E[\ell] = \langle P_\ell, Q_\ell \rangle$
		\State $\tilde{j} \gets j(E) + \epsilon \in \FF_{p^2}[\epsilon]/(\epsilon^{\ell+2})$
		\State $\E \gets $ elliptic curve with $j(\E) = \tilde{j}$
		\linecomment{computing and lifting all $\ell$-isogenies}
		\For{$k \gets 0, \dots, \ell$}
		\If{$k = \ell$}
		\State $P_k \gets Q_\ell$
		\Else
		\State $P_k \gets P_\ell + k \cdot Q_\ell$
		\EndIf
		\State $E_k \gets E/\langle P_k\rangle$ with $f_k: E \to E_k$
		\linecomment{constructing the $(\ell,2^n-\ell)$-isogeny diamond}
		\State $E_k' \gets E/\langle \gamma(P_k)\rangle$ with $f_k': E \to E_k'$
		\State $K \gets \langle (\hat{\gamma}(P_{2^n}), f_k'(P_{2^n})), (\hat{\gamma}(Q_{2^n}), f_k'(Q_{2^n})) \rangle$
		\State $(\E,\E_k,\E',\E_k') \gets $ \texttt{lift\_isogeny\_diamond}$(E,E_k,E,E_k',K, \E)$
		\State $\tilde{j_k} \gets j(\E_k)$
		\EndFor
		\linecomment{final step}
		\State $\varphi \gets \prod_{k=0}^{\ell}(Y - \tilde{j_k})(\epsilon = X - j(E)) \in \FF_p[X,Y]$
		\State \Return $\varphi$
	\end{algorithmic}
\end{algorithm}

\begin{theorem}\label{thm:modular-polynomial-a}
	On input a prime  $\ell \equiv 3 \pmod{4}$ and a prime $p \in \mathcal{P}_\ell$, Algorithm \ref{algo:modp} returns the modular polynomial $\varphi_\ell$ over $\FF_p$. The algorithm runs in  \[
	O(\log p + \ell^2) \cdot \mathsf{M}(\FF_{p^2}) + O(n\ell) \cdot \mathsf{M}(R)
        + O(\log \ell) \mathsf{M}(R, \ell),
	\]
	with $R = \FF_{p^2}[\epsilon]/(\epsilon^{\ell+2})$, and $\mathsf{M}(\cdot)$ as defined in Subsection \ref{subsec:arithmetic-in-R}.
\end{theorem}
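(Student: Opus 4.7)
The $\ell+1$ cyclic subgroups of $E[\ell]$ are exhausted by $\langle P_\ell + k Q_\ell\rangle$ for $k=0,\dots,\ell-1$, together with $\langle Q_\ell\rangle$, so the $\ell$-isogenies $f_k:E\to E_k$ enumerated in the loop are indeed the full set of $\ell$-isogenies from $E$. For each $k$, the endomorphism $\gamma=a+b\iota\in\End(E)$ has degree $a^2+4b^2=2^n-\ell$, which is coprime to $\ell$, so $f_k$ and $\gamma$ fit into an $(\ell,2^n-\ell)$-isogeny diamond $(E,E_k,E,E_k')$ with $E_k'=E/\langle\gamma(P_k)\rangle$. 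Kani's lemma packages this into a product $2^n$-isogeny $F_k:E\times E_k'\to E\times E_k$, whose kernel is explicitly the subgroup $K$ constructed from the images of $P_{2^n},Q_{2^n}$ under $\hat\gamma$ and $f_k'$. By \cref{cor:unique-lift-isogeny-diamond}, the diamond admits a unique lift to $R=\FF_{p^2}[\epsilon]/(\epsilon^{\ell+2})$, which Algorithm~\ref{algo:lift_isogeny_diamond} computes, producing $\tilde{j}_k=j(\E_k)\in R$. By \cref{prop:deformation-parameters}, $\tilde{j}_k$ is precisely the truncation modulo $\epsilon^{\ell+2}$ of the analytic branch at $X=j(E)$ of the $j$-invariant of the $k$-th $\ell$-isogenous curve, viewed as an algebraic function of $X$. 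Therefore
\[
\prod_{k=0}^{\ell}(Y-\tilde{j}_k)\;\equiv\;\varphi_\ell\bigl(j(E)+\epsilon,\,Y\bigr)\pmod{\epsilon^{\ell+2}}.
\]
Since $\deg_X\varphi_\ell=\ell+1$, both sides agree exactly as polynomials once $\epsilon$ is replaced by the indeterminate $X-j(E)$, and the final substitution recovers $\varphi_\ell(X,Y)$.

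\textbf{Complexity of setup and main loop.} Because $2^n\cdot\ell\mid p+1$ and $E/\FF_{p^2}$ is supersingular with $\#E(\FF_{p^2})=(p+1)^2$, a random-point sample followed by cofactor clearing produces bases $(P_{2^n},Q_{2^n})$ and $(P_\ell,Q_\ell)$ in $O(\log p)$ operations in $\FF_{p^2}$. Inside the loop, $P_k$ is obtained incrementally from $P_{k-1}$ at cost $O(1)$, and the two V\'elu computations producing $E_k$ and $E_k'$, together with the evaluation of $\hat\gamma$ and $f_k'$ on the $2^n$-torsion basis, cost $O(\ell)$ operations in $\FF_{p^2}$ each. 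Summed over the $\ell+1$ iterations this contributes $O(\ell^2)\cdot\mathsf{M}(\FF_{p^2})$, giving the first term of the bound.

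\textbf{Complexity of lifting and the final product.} Each call to \texttt{lift\_isogeny\_diamond} runs $\lceil\log_2(\ell+2)\rceil$ Newton-style precision-doubling iterations; at each iteration two lifts of the $(2^n,2^n)$-chain of length $n$ are performed, each via Algorithm~\ref{algo:Richelot-chain} (or the radical-isogeny variant of \cref{rem:radical}) at cost $O(n)$ multiplications in $R$ at the current precision. Since the cost of arithmetic in $R$ is superlinear in the precision, the geometric sum is dominated by the terminal iteration at precision $\ell+2$, so each diamond lift costs $O(n)\cdot\mathsf{M}(R)$, yielding $O(n\ell)\cdot\mathsf{M}(R)$ across all $k$. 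Finally, the product $\prod_{k=0}^{\ell}(Y-\tilde{j}_k)\in R[Y]$ is assembled by a balanced subproduct tree in $O(\log\ell)\cdot\mathsf{M}(R,\ell)$ operations, and the translation $\epsilon\mapsto X-j(E)$ is a polynomial shift absorbed into this bound. Summing the three contributions yields the stated complexity.

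\textbf{Main obstacle.} The delicate part is controlling the precision and the side conditions of \texttt{lift\_isogeny\_diamond}: we must know a priori that precision $\ell+2$ is sufficient (this rests on the degree bound $\deg_X\varphi_\ell=\ell+1$), and that none of the $\ell+1$ intermediate curves $E_k,E_k'$ has $j$-invariant in $\{0,1728\}$, which is the assertion required by Algorithm~\ref{algo:lift_isogeny_diamond}. The first is automatic; for the second, we observe that for $p>11$ in $\P_\ell$ only finitely many curves can meet either exceptional $j$-invariant, and for primes where such a collision occurs one may either perturb the starting curve among its $2$-isogeny neighbourhood or discard the prime (absorbing the loss into the CRT density estimate of \cref{lem:primes-in-Pl}); this does not affect the asymptotic complexity.
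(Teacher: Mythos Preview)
Your proof is correct and follows essentially the same route as the paper: enumerate the $\ell+1$ cyclic $\ell$-subgroups, package each $\ell$-isogeny into a $(2^n,2^n)$-product isogeny via Kani, lift the diamond by Newton doubling to precision $\ell+2$, and reconstruct $\varphi_\ell$ from the product of linear factors by a subproduct tree. Your complexity bookkeeping matches the paper's term by term.

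One point worth noting: your ``main obstacle'' paragraph is more scrupulous than the paper itself. The paper only checks $j(E)=(2\cdot 3\cdot 11)^3\notin\{0,1728\}$ for $p>11$ and does not address whether the codomains $E_k,E_k'$ might land on an exceptional $j$-invariant, even though Algorithm~\ref{algo:lift_isogeny_diamond} asserts this for all four vertices. Your proposed fixes (perturb the base curve within its $2$-isogeny class, or drop the offending prime and absorb the loss into the CRT density) are reasonable and do not affect the asymptotics; the paper simply elides this issue.
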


\begin{proof}
First, the algorithm sets $E: y^2 = x^3 + 6x^2 +x$ over $\FF_{p^2}$. This curve is $2$-isogenous to the elliptic curve with $j$-invariant $0$, hence there exists an endomorphism $\iota: E \to E$ with the property $\iota^2 = [-4]$. Since $p \in P_\ell$, there exist values $n,a,b$ so that $2^n-\ell = a^2+4b^2$, and the algorithm sets $\gamma = [a]+[b]\iota$ which is an endomorphism of degree $2^n-\ell$. 
Further, a basis $(P_{2^n},Q_{2^n})$ for $E[2^n]$ and a basis $(P_\ell,Q_\ell)$ for $E[\ell]$ are computed. Note that by assumption $p \equiv 3 \pmod{4}$, hence the elliptic curve $E$ is supersingular and has cardinality $(p+1)^2$. As a consequence, the $\ell\cdot 2^n$-torsion is $\FF_{p^2}$-rational and it is not necessary to work in field extensions. The cost of the setup is dominated by the cost for computing the $N$-torsion bases for $N \in \{\ell,2^n\}$. This  can be done by sampling two points $P,Q \in E(\FF_{p^2})$, computing $P_N = \frac{p+1}{N} \cdot P,\; Q_N = \frac{p+1}{N}\cdot Q$ and the Weil pairing $e_N(P_N,Q_N)$. This is repeated until $e_N(P_N,Q_N)$ has order $N$.\footnote{There are better methods to compute a basis in practice. In particular, in the case of $N=2^n$, one should use the methods developed in the framework of SIDH key compression \cite{costello2017efficient}.} This step costs 
\begin{equation} \label{eq:setup1}
 (\log(p) + \log \ell ) \cdot \mathsf{M}(\FF_{p^2})
\end{equation}

In the last line of the setup, we compute the deformation $\E$ of $E$ with $j$-invariant $j(\E) = j(E) + \epsilon$. Note that $j(E) = (2\cdot 3 \cdot 11)^3 \notin \{0,1728\}$ for $p >11$, hence such a deformation exists, and it can be computed in time $O(1)$ in $R = \FF_{p^2}[\epsilon]/(\epsilon^{\ell+2})$. This means the complexity of this step is simply
\begin{equation} \label{eq:setup2}
	O(1) \cdot \mathsf{M}(R).
\end{equation}

The main part of the algorithm consists of computing the $\ell+1$ different $\ell$-isogenies emanating from the elliptic curve $E$; and lifting these to isogenies emanating from the elliptic curve $\E$ with $j$-invariant $\tilde{j} = j+\epsilon$. Using the square-root V\'elu Algorithm, the isogeny computations over the ground field can be done in $\Otilde(\ell \sqrt{\ell})$ $\FF_{p^2}$-multiplications. For the overall analysis it is sufficient to use the standard V\'elu formulas, which results in 
\begin{equation} \label{eq:ell-isogenies}
  	O(\ell^2) \cdot \mathsf{M}(\FF_{p^2}).
\end{equation}

To explain the lifting step in more detail, assume that we are at step $k$ and want to lift the $\ell$-isogeny $f_k:E \to E_k$ to an isogeny with domain $\E$. 
To this end, we construct an $(\ell, 2^n-\ell)$-isogeny diamond as depicted below.
\begin{center}
	\begin{tikzcd}
		E \arrow[r, "f_k"] \arrow[d, "\gamma"] & E_k \arrow[d, "\gamma'"] \\
		E \arrow[r, "f_k'"]& E_k'
	\end{tikzcd}
\end{center}
This gives rise to a product isogeny $F: E \times E_k' \to E \times E_k$ with kernel 
\[
K = \langle (\hat{\gamma}(P_{2^n}), f_k'(P_{2^n})), (\hat{\gamma}(Q_{2^n}), f_k'(Q_{2^n})) \rangle.
\]
On input the vertices of the isogeny diamond $E,E_k,E,E_k'$ and the lift $\E$ together with the kernel $K$, the method \texttt{lift\_isogeny\_diamond} (Algorithm \ref{algo:lift_isogeny_diamond}) outputs the vertices of the lifted isogeny diamond $(\E,\E_k,\E',\E_k')$. In particular, this contains the lift $\E_k$ of $E_k$ such that $f_k$ lifts to an isogeny $\tilde{f_k}:\E \to \E_k'$ over $\FF_{p^2}[\epsilon]/(\epsilon^{\ell+2})$. We save its $j$-invariant $\tilde{j_k}$. 
Note that since we are lifting a $(2^n,2^n)$-chain here, we can use an improved lifting strategy in \texttt{lift\_isogeny\_diamond} as described in Subsection \ref{subsec:lift-smooth-isogeny}. For each $\ell$-isogeny this gives us complexity $O(n)$ in $R$, hence overall the lifting step for all $\ell$-isogenies costs
\begin{equation} \label{eq:lifting}
  O(n \ell) \cdot  \mathsf{M}(R)
\end{equation}

In the final step, the product 
\[
	\varphi_\ell(j(E) + \epsilon, Y) =  \prod_{k=0}^\ell (Y - \tilde{j_k}) \in R[Y].
\]
is computed and evaluated at $\epsilon = X - j(E)$. This yields the modular polynomial $\varphi_\ell(X,Y)$ (modulo $(X - j(E))^{\ell+2}$). Given that $\varphi_\ell(X,Y)$ has degree $\ell+1$, we may ignore the modulus and we conclude that the output is the modular polynomial $\varphi_\ell(X,Y)$ in $\FF_p$.  The evaluation of the product can be done via a product-tree strategy,
which costs $O(\log \ell) \mathsf{M}(R, \ell)$.
In our setting, the complexity for this multiplication is simply given by 
\begin{equation} \label{eq:final-product}
  O(\log \ell) \mathsf{M}(R, \ell)
\end{equation}

Taking the sum over the complexities from Equations \ref{eq:setup1} - \ref{eq:final-product}, we obtain a complexity of
\[
O(\log p + \ell^2) \cdot \mathsf{M}(\FF_{p^2}) + O(n\,\ell) \cdot \mathsf{M}(R)
+ O(\log \ell) \mathsf{M}(R, \ell).
\]
\end{proof}

\subsection{Generalization to $\ell \equiv 1 \pmod{4}$} \label{subsec:modular-poly-b}

In Algorithm \ref{algo:modp} we described a method to compute the modular polynomial $\varphi_\ell$ in $\FF_p[X,Y]$ for primes in $P_\ell$, when $\ell \equiv 3 \pmod{4}$. In this part, we sketch a modification of the algorithm which allows us to  compute the modular polynomial for $\ell \equiv 1 \pmod{4}$. 

The different treatment stems from the fact that in this case, there do not exist integers $a,b,n$ so that $2^n-\ell =  a^2 + 4b^2$. Instead we consider the equation $2^n - 3 \cdot \ell =  a^2 + 4b^2$, and extend the $\ell$-isogenies by an auxiliary $3$-isogeny.

\begin{theorem}\label{thm:modular-polynomial-b}
	There exists a modification of Algorithm \ref{algo:modp} which on input a prime  $\ell \equiv 1 \pmod{4}$ and a prime $p \in \mathcal{P}_\ell$, returns the modular polynomial $\varphi_\ell$ over $\FF_p$. The algorithm runs in  \[
	O(\log p + \ell^2) \cdot \mathsf{M}(\FF_{p^2}) + O(n\ell) \cdot \mathsf{M}(R)
	+O(\log \ell) \cdot \mathsf{M}(R, \ell),
	\]
	with $R = \FF_{p^2}(\epsilon)/(\epsilon^{\ell+2})$, and $\mathsf{M}(\cdot)$ as defined in Subsection \ref{subsec:arithmetic-in-R}.
\end{theorem}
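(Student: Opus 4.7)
The obstruction to applying Algorithm \ref{algo:modp} directly when $\ell \equiv 1 \pmod 4$ is that, for any $n \ge 2$, one has $2^n - \ell \equiv -1 \equiv 3 \pmod 4$, so $2^n - \ell$ cannot be written as $a^2 + 4b^2$ and no integral endomorphism $\gamma = [a] + [b]\iota$ of that norm exists on $E: y^2 = x^3 + 6x^2 + x$. The definition of $\mathcal{P}_\ell$ compensates for this with the factor $c_\ell = 3$: for $p \in \mathcal{P}_\ell$ we have $3 \cdot 2^n \cdot \ell \mid p+1$, so in addition to full rational $\ell$- and $2^n$-torsion, the curve $E/\F_{p^2}$ also has full rational $3$-torsion. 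The plan is then to absorb the extra factor of $3$ into the isogeny diamond by extending each $\ell$-isogeny to a $3\ell$-isogeny.

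Concretely, the modified algorithm proceeds as in Algorithm \ref{algo:modp}, except that $\gamma = [a] + [b]\iota$ is now chosen so that $\deg \gamma = 2^n - 3\ell = a^2 + 4b^2$, and an $\F_{p^2}$-basis $(P_3, Q_3)$ of $E[3]$ is additionally computed in the setup. Then, for each $k \in \{0, \dots, \ell\}$, after computing $f_k: E \to E_k$ as before, one fixes (canonically, e.g.\ from a chosen generator of $E_k[3]$) an auxiliary $3$-isogeny $h_k: E_k \to E_k^{(3)}$ and forms $g_k = h_k \circ f_k: E \to E_k^{(3)}$ of degree $3\ell$. The $(3\ell,\,2^n-3\ell)$-isogeny diamond built from $g_k$ and $\gamma$ yields, via Kani's lemma, a kernel $K \subset (E \times E')[2^n]$ of the corresponding $(2^n, 2^n)$-product isogeny, entirely analogous to the one in Algorithm \ref{algo:modp}. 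A call to \texttt{lift\_isogeny\_diamond} then returns, in particular, the deformation $\E_k^{(3)}$ of $E_k^{(3)}$. To recover $j(\E_k)$ from $\E_k^{(3)}$, one uses that the dual $\hat h_k: E_k^{(3)} \to E_k$ is a $3$-isogeny whose kernel is a rational point of $E_k^{(3)}[3]$ computable from $h_k$; this point lifts uniquely to $\E_k^{(3)}[3]$ via \texttt{lift\_point} in $O(1)$ operations over $R$, and Vélu's formulas applied to the lifted kernel recover $\E_k$ and hence $\tilde j_k = j(\E_k)$. Correctness follows from Proposition \ref{prop:lift-isogenies}: the deformation of $E_k$ obtained by lifting $\hat h_k$ starting from $\E_k^{(3)}$ is the unique one compatible with the deformation of $g_k$, and by uniqueness of lifts applied to $g_k = h_k \circ f_k$ it agrees with the deformation induced by $f_k$ alone.

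The cost analysis is essentially identical to that of Theorem \ref{thm:modular-polynomial-a}. The extra basis computation for $E[3]$ costs $O(\log p) \cdot \mathsf{M}(\F_{p^2})$ and is absorbed into the setup term. The $\ell + 1$ auxiliary $3$-isogeny evaluations over $\F_{p^2}$ cost $O(\ell) \cdot \mathsf{M}(\F_{p^2})$, subsumed by the $O(\ell^2) \cdot \mathsf{M}(\F_{p^2})$ cost of computing the $\ell$-isogenies themselves. Lifting each $\hat h_k$ over $R$ requires lifting one $3$-torsion point and applying degree-$3$ Vélu, contributing $O(\ell) \cdot \mathsf{M}(R)$, absorbed into the $O(n\ell) \cdot \mathsf{M}(R)$ term. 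The call to \texttt{lift\_isogeny\_diamond} still amounts to lifting a $(2^n, 2^n)$-chain of length $n$ via the radical-isogeny strategy of Remark \ref{rem:radical}, contributing $O(n\ell) \cdot \mathsf{M}(R)$ as before, and the final product-tree reconstruction of $\varphi_\ell(j(E) + \epsilon, Y) = \prod_{k=0}^\ell (Y - \tilde j_k)$ still costs $O(\log \ell) \cdot \mathsf{M}(R, \ell)$. Summing yields the stated complexity.

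The only non-routine step is the recovery of $\E_k$ from $\E_k^{(3)}$: one must argue that the $j$-invariants produced by lifting $\hat h_k$ from the already-lifted $\E_k^{(3)}$ are indeed the $j$-invariants of the $\ell+1$ codomains parameterised by $\varphi_\ell(j(E)+\epsilon, Y)$. This is precisely what the uniqueness part of Proposition \ref{prop:lift-isogenies}, applied to the decomposition $g_k = h_k \circ f_k$, guarantees, so once the construction is laid out explicitly there is no further obstacle.
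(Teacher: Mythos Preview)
Your approach is correct and reaches the stated complexity, but it differs from the paper's in where the auxiliary $3$-isogeny is placed. The paper \emph{precomposes}: it fixes a single $3$-isogeny $g: E \to C_0$ on the domain side, so that for every $k$ the $(3\ell, 2^n - 3\ell)$-diamond has vertices $C_0, E_k, C_1, E_k'$. The lift $\tilde g: \E \to \C_0$ is computed once, and then \texttt{lift\_isogeny\_diamond}$(C_0, E_k, C_1, E_k', K, \C_0)$ returns $\E_k$ directly as an output vertex, with no post-processing. You instead \emph{postcompose}: each $k$ gets its own $h_k: E_k \to E_k^{(3)}$, the lifted diamond yields $\E_k^{(3)}$, and you then need the extra step of lifting $\hat h_k$ to recover $\E_k$.

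Both routes are valid; the paper's placement buys a cleaner algorithm (one shared $3$-isogeny, one lift of it, and $\E_k$ read off immediately), while your placement requires $\ell+1$ distinct $3$-isogenies and $\ell+1$ dual-lift recoveries. As you observe, this overhead is $O(\ell)\cdot\mathsf{M}(R)$ and is absorbed into the $O(n\ell)\cdot\mathsf{M}(R)$ term, so the asymptotics are unaffected. Your correctness argument via uniqueness of lifts applied to the factorisation $g_k = h_k \circ f_k$ is exactly the right one.
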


\begin{proof}
	
	We outline the modifications of Algorithm \ref{algo:modp}. It will be clear that these changes have no effect on the  asymptotic complexity of the algorithm. 
	
	In the setup, we choose an endomorphism $\gamma$  as $\gamma = [a] + [b]\iota$ with $2^n-3 \cdot \ell = a^2+4b^2$.  And we also compute an auxiliary  degree-$3$ isogeny $g:E \to C_0$.

	The main change occurs in the construction of the isogeny diamond. In order to make $\gamma$ and the $\ell$-isogeny $f_k$ fit into an isogeny diamond (with smooth induced product isogeny), it is necessary to expand $f_k$ by the auxiliary isogeny $g$ to obtain the following configuration.
	\begin{equation}
		\label{eq:large-isogeny-diamond}
		\begin{tikzcd}
			C_0 \arrow[d, "\gamma_0"]& E \arrow[r, "f_k"] \arrow[l, "g"] \arrow[d, "\gamma"] & E_k \arrow[d, "\gamma'"] \\
			C_1 & E \arrow[r, "f_k'"] \arrow[l, "g'"] & E_k'
		\end{tikzcd}
	\end{equation}
	
	The outer square is a $(3\cdot \ell, 2^n-3 \cdot \ell)$-isogeny diamond and induces the product isogeny $F: C_0 \times E_k' \to C_1 \times E_k$ with kernel 
	\[
	K = \left\langle \left(\hat{\gamma_0}(P_{2^n}),f_k'\circ\hat{g'}(P_{2^n})\right), \,\left(\hat{\gamma_0}(Q_{2^n}),f_k'\circ\hat{g'}(Q_{2^n})\right) \right\rangle,
	\]
	where $(P_{2^n},Q_{2^n})$ is a basis for $C_1[2^n]$. 
	
	Recall that the goal is to lift the $\ell$-isogeny $f_k$, for some fixed lift $\E$ of $E$. For this purpose, one first computes the lift $\tilde{g}: \E \to \C_0$ explicitly.
	Then we call \texttt{lift\_isogeny\_diamond} on input $(C_0,E_k,C_1,E_k', K, \C_0)$.  The output contains the lift $\E_k$, and the algorithm proceeds as in the case $\ell \equiv 3 \pmod{4}$.
\end{proof}

The modified version of Algorithm \ref{algo:modp} which is outlined in the proof above, is presented in Appendix \ref{sec:variants} (Algorithm \ref{algo:modp_3}).

\subsection{The general case}
\label{subsec:generalcase}
In this section, we give a quick overview of the proof of \cref{cor:main}.
If we start with an HD representation of the $\ell$-isogenies starting from
$E_0$, we can deform them to $k[\epsilon]/(\epsilon^{m+1})$ using
\cref{thm:main2} as explained in \cref{subsec:generaldeformation}.
When $m>\ell$, this is enough to reconstruct $\Phi_{\ell} \mod p$ in quasi-linear time as in the proof of \cref{thm:modular-polynomial-a}.

In the previous two subsections, we provided algorithms to compute the modular polynomial $\varphi_\ell$ over a prime field $\FF_p$, where $p$ was an element in $\P_{\ell}$. The number of primes of this form relies on Heuristic \ref{heuristic:suitable-n} which describes the size of $n$, for which $2^n -\ell$ can be written as a sum of two squares. Using a dimension~$8$ embedding instead of a dimension~$2$ embedding, as explained in \cref{subsec:generaldeformation},
we generalize the techniques to a larger set of primes by expressing $2^n -\ell$ as a sum of four squares. More explicitly, this results in an algorithm 
for all primes $p$ in 
\[
\P^*_\ell = \{p > 11 \text{ prime : } 2^n \cdot \ell \mid p+1, \text{ where } n = \lceil\log_2(\ell)\rceil\}
\]
of complexity $O(\log^2 p + \ell^2 \log p)$.

\section{Computing the modular polynomial} \label{sec:modular}

Here, we present our algorithm for computing the modular polynomial $\Phi_\ell \in \ZZ[X,Y]$ of elliptic curves. The procedure is summarized in Algorithm \ref{algo:full}. It is a CRT based approach, where the modular polynomial is computed modulo many small primes using Algorithm \ref{algo:modp} or its modification described in Subsection \ref{subsec:modular-poly-b}.  The runtime of the algorithm relies on Heuristic \ref{heuristic:suitable-n}, but we prove that there also exists an unconditional version with the same asymptotic runtime. 

\begin{algorithm}
	\caption{\texttt{modular\_polynomial}}\label{algo:full}
	\begin{flushleft}
	\textbf{Input:} An odd prime $\ell$.\\
	\textbf{Output:} The modular polynomial $\Phi_\ell(X,Y) \in \ZZ[X,Y]$.
	\end{flushleft}
	\begin{algorithmic}[1]
		\State $B \gets 6 \ell \log(\ell) + 16 \ell + \min{(2\ell,14\sqrt{\ell}{\log{\ell}})} + \log(2)$ 
		\State $P \gets 1$, $\Phi_\ell \gets 0$.
		\State $c_\ell \gets -\ell \pmod{4}$.
		\State $n \gets \min\{n \mid \exists~ a,b : 2^n - c_\ell\cdot \ell = a^2 + 4b^2\}$ \label{line:choose-n}
		\While{$P < \exp(B)$}
		\State $p \gets $  next prime with $2^n \cdot c_\ell \cdot \ell \mid  p + 1$
		\State $\varphi_\ell \gets$ \texttt{modular\_polynomial\_modp}$(\ell,p) \in \FF_p[X,Y]$
		\State $P \gets P \cdot p$
		\State $\Phi_\ell \gets$ \texttt{CRT}$(\Phi_\ell, \varphi) \in \ZZ/P\ZZ[X,Y]$
		\EndWhile            
		\State \Return $\Phi$
	\end{algorithmic}     
\end{algorithm}        
                       
\begin{theorem} \label{thm:modular-polynomial}
	On input an odd prime $\ell$, Algorithm \ref{algo:full} computes the modular polynomial $\Phi_\ell \in \ZZ[X,Y]$. Under Heuristic \ref{heuristic:suitable-n}, the algorithm runs in time 
	\[
	O(\ell^3 \log^3 \ell \log\log \ell).
	\]
\end{theorem}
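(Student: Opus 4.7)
The plan is to split the argument into unconditional correctness and a heuristic complexity analysis.

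\textbf{Correctness.} By \cref{thm:modular-polynomial-a,thm:modular-polynomial-b} each call to \texttt{modular\_polynomial\_modp} returns $\Phi_\ell \bmod p$, so applying CRT coefficient-wise produces the unique lift of $\Phi_\ell$ modulo $P = \prod_i p_i$. The loop exits only when $P > \exp(B)$; combined with the bound (\ref{eq:bound}) which gives $|a_{i,j}| < \tfrac{1}{2} \exp(B)$, the symmetric-residue representative of $\Phi_\ell \bmod P$ coincides with $\Phi_\ell \in \ZZ[X,Y]$ exactly. This part is independent of \cref{heuristic:suitable-n}, apart from requiring that some $n$ exist on Line~\ref{line:choose-n}; for large enough $\ell$ the congruence obstruction is killed by the choice of $c_\ell$, and at worst one would invoke the $\P_\ell^*$ variant of \cref{subsec:generalcase}.

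\textbf{Complexity.} I would bound three quantities in turn. First, under \cref{heuristic:suitable-n}, Line~\ref{line:choose-n} returns $n = O(\log \ell)$, and \cref{lem:primes-in-Pl} then ensures that each prime selected satisfies $\log p = O(\log \ell)$ (the $m$-th prime $\equiv -1 \pmod{2^n c_\ell \ell}$ grows polynomially, and we need $m = O(\ell)$). Second, since $B = O(\ell \log \ell)$ by (\ref{eq:bound}) and each $\log p_i = O(\log \ell)$, the while-loop executes $O(\ell)$ times. Third, with $R = \FF_{p^2}[\epsilon]/(\epsilon^{\ell+2})$, \cref{thm:modular-polynomial-a,thm:modular-polynomial-b} bound the per-prime cost by
\[
O(\log p + \ell^2) \cdot \mathsf{M}(\FF_{p^2}) + O(n\ell) \cdot \mathsf{M}(R) + O(\log \ell) \cdot \mathsf{M}(R, \ell).
\]
Substituting $\log p, n = O(\log \ell)$ and the estimates $\mathsf{M}(R) = O(\ell \log^2 \ell \log\log \ell)$ and $\mathsf{M}(R, \ell) = O(\ell^2 \log^2 \ell \log\log \ell)$ from \cref{subsec:arithmetic-in-R}, the dominating term is $O(\log \ell) \cdot \mathsf{M}(R, \ell) = O(\ell^2 \log^3 \ell \log\log \ell)$.

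Multiplying the $O(\ell)$ primes by the per-prime cost $O(\ell^2 \log^3 \ell \log\log \ell)$ gives the stated bound $O(\ell^3 \log^3 \ell \log\log \ell)$. The final CRT reconstruction via the explicit CRT of \cite{bernstein2007modular} is quasi-linear in the output size $O(\ell^3 \log \ell)$ and is absorbed. The main obstacle is the $O(\log \ell)$ bound on $\log p$, which is precisely what \cref{heuristic:suitable-n} buys; removing it would require the dimension-$8$ embedding and primes in $\P_\ell^*$ as sketched in \cref{subsec:generalcase}, yielding the unconditional variant with the same asymptotic since the per-prime cost and the prime-counting argument change only by absolute constants.
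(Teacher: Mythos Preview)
Your proposal is correct and follows essentially the same approach as the paper's own proof: both argue correctness via the height bound~(\ref{eq:bound}) and CRT, invoke \cref{heuristic:suitable-n} to get $n=O(\log\ell)$ and hence $\log p=O(\log\ell)$ via \cref{lem:primes-in-Pl}, count $O(\ell)$ primes from $B=O(\ell\log\ell)$, and plug the $\mathsf{M}(R)$ and $\mathsf{M}(R,\ell)$ estimates of \cref{subsec:arithmetic-in-R} into the per-prime cost of \cref{thm:modular-polynomial-a,thm:modular-polynomial-b}. Your write-up is in fact slightly more explicit than the paper's (you name the dominant term, note that the $O(n\ell)\cdot\mathsf{M}(R)$ term ties it, and account for the CRT reconstruction cost), but there is no substantive difference in strategy.
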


\begin{proof}
	The bound
	\[
	B = 6 \ell \log(\ell) + 16 \ell + \min{(2\ell,14\sqrt{\ell}{\log{\ell}})} + \log(2)
	\] 
	in the first line of the algorithm is equal to the bound on the logarithmic height of the coefficients of $\Phi_\ell$ (see Eq. \ref{eq:bound}) plus $\log(2)$. Consequently, the coefficients of $\Phi_\ell$ are uniquely determined by their residues modulo $\exp(B)$.
	
	In Line \ref{line:choose-n},  a minimal integer $n$ is chosen for which there exists a pair $(a,b)$ so that $2^n - c_\ell \cdot \ell = a^2 + 4b^2$. Under Heuristic \ref{heuristic:suitable-n}, we may assume that $n = O(\log(\ell))$.
	
	In the main part of the algorithm, the modular polynomial  $\varphi_\ell \in \FF_p[X,Y]$ is computed for various suitable primes $p \in P_\ell$, until the modulos $P$ is at least $\exp(B)$. At each step, we update $P = P \cdot p$ and compute  $\Phi_\ell \in \ZZ/P\ZZ$ using an explicit version of the Chinese Remainder Theorem. 
	
	As per Theorems \ref{thm:modular-polynomial-a} and \ref{thm:modular-polynomial-b}, the computation of $\varphi_\ell \in \FF_p[X,Y]$ is done in time \[
	O(\log p + \ell^2) \cdot \mathsf{M}(\FF_{p^2}) +  O(n\ell)  \cdot \mathsf{M}(R)
	+O(\log \ell) \mathsf{M}(R, \ell),
	\]
	with $R = \FF_{p^2}(\epsilon)/(\epsilon^{\ell+2})$.
	Further, we may assume that  $n = O(\log(\ell))$ under Heuristic \ref{heuristic:suitable-n}. Note that $B = O(\ell\log(\ell))$, hence it suffices to compute $\varphi_\ell$ for $O(p)$ many primes of size $\log(p) \in O(\log(\ell))$. Lemma \ref{lem:primes-in-Pl} assures that there are enough primes of the desired form in $\P_\ell$.  In this setting, 
	\begin{gather*}
	  \mathsf{M}(\FF_{p^2}) = \mathsf{M}(\log \ell)= O(\log\ell \log\log\ell \log\log\log \ell),\\
	  \mathsf{M}(R) = \mathsf{M}(\ell \log \ell) = O(\ell\log^2\ell\log\log\ell),\\
	  \mathsf{M}(R, \ell)=\mathsf{M}(\ell^2 \log \ell)=O(\ell^2 \log^2 \ell \log \log \ell)
    \end{gather*}
	 In conclusion, we obtain 
	 \[
	O(\ell^3 \log^3\ell \log\log\ell)
	 \]
	 for the overall runtime of Algorithm \ref{algo:full}.
\end{proof}

\begin{theorem} \label{thm:modular-polynomial2}
	There exists an algorithm for computing the modular polynomial $\Phi_\ell$ for any prime $\ell$, with unconditional runtime 
	\[
	O(\ell^3 \log^3\ell \log\log\ell).
	\]
\end{theorem}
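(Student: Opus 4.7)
The plan is to mirror the proof of \cref{thm:modular-polynomial} but replace the set $\P_\ell$ (whose density relies on \cref{heuristic:suitable-n}) by the unconditional set $\P^*_\ell$ from \cref{prop:pell}, where one only fixes $n=\lceil\log_2\ell\rceil$. For each prime $p\in\P^*_\ell$ I would modify \texttt{modular\_polynomial\_modp} following the construction sketched in \cref{subsec:generalcase,subsec:generaldeformation}: the supersingular curve $E_0/\F_{p^2}$ has rational $2^n\ell$-torsion, so Lagrange's four-square decomposition $2^n-\ell=a^2+b^2+c^2+d^2$ together with the higher-dimensional Kani lemma of \cite{EC23:Robert} produces a \emph{special} HD representation of dimension~$8$ for each of the $\ell+1$ rational $\ell$-isogenies out of $E_0$.

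Applying the special case of \cref{thm:main2} then deforms each such isogeny to $R=\F_{p^2}[\epsilon]/(\epsilon^{\ell+2})$ in $O(\log\ell)$ arithmetic operations in $R$. Re-running the accounting of \cref{thm:modular-polynomial-a} verbatim (the ambient dimension has changed but the asymptotic cost per deformed step has not) yields the same per-prime complexity
\[
O(\log p+\ell^2)\cdot\Mm(\F_{p^2})+O(\ell\log\ell)\cdot\Mm(R)+O(\log\ell)\cdot\Mm(R,\ell).
\]

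Next I would control the primes themselves. With $n=\lceil\log_2\ell\rceil$, \cref{lem:primes-in-Pl} (which depends only on Linnik's theorem, hence is unconditional) bounds the $m$-th prime of $\P^*_\ell$ by $(2^n\ell)^L m\log^2 m=O(\ell^{L+1}m\log^2 m)$ with $L\le 5$. The height bound \eqref{eq:bound} forces Algorithm \ref{algo:full} to iterate over $O(\ell)$ primes, each of logarithmic size $\log p=O(\log\ell)$. In this regime $\Mm(R)=O(\ell\log^2\ell\log\log\ell)$ and $\Mm(R,\ell)=O(\ell^2\log^2\ell\log\log\ell)$, so the per-prime cost is $O(\ell^2\log^3\ell\log\log\ell)$; summing over the $O(\ell)$ primes yields the claimed unconditional bound $O(\ell^3\log^3\ell\log\log\ell)$, with the explicit CRT reconstruction \cite{bernstein2007modular} absorbed into this cost.

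The main obstacle is making the dimension-$8$ variant of \cref{subsec:generalcase} sufficiently explicit to justify \cref{thm:main2} in that regime: one must verify that the initialisation (four-square decomposition, $2^n$-torsion basis, the $8$-dimensional theta-model chain realising the HD representation together with the final symplectic change of basis), the deformation of each $2$-isogeny step in this chain, and the eventual extraction of $j(\E_k)$ can all be carried out within an $O(\log\ell)$ budget of operations in $R$ per $\ell$-isogeny. This is exactly the content of the special case of \cref{thm:main2}, so the remaining work is careful bookkeeping rather than new ideas, and no heuristic is invoked.
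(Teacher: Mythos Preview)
Your proposal is correct and follows exactly the paper's own argument: replace $\P_\ell$ by $\P^*_\ell$ so that $n=\lceil\log_2\ell\rceil$ is fixed unconditionally, use the dimension-$8$ embedding from \cref{subsec:generalcase} in place of the dimension-$2$ one, and then rerun the complexity analysis of \cref{thm:modular-polynomial} verbatim. The paper's proof is much terser (three sentences), but your more detailed accounting of the per-prime cost, the Linnik bound on the CRT primes, and the bookkeeping required for the dimension-$8$ deformation is faithful to what the paper intends.
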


\begin{proof}
We use the same proof as in \cref{thm:modular-polynomial}, with the exception that $P_\ell$ is replaced by the set of primes $\P^*_\ell$ defined in \cref{subsec:generalcase}. This requires to use dimension~$8$ embeddings instead of dimension~$2$ embeddings.
However, the advantage of $\P^*_\ell$ is that we can set
$n=\lceil\log_2(\ell)\rceil$
in the statement of \cref{lem:primes-in-Pl} without heuristic assumptions.
\end{proof}

\begin{remark}
	The runtime of the algorithms from Theorem \ref{thm:modular-polynomial} and \ref{thm:modular-polynomial2} improves to 
	\(
	O(\ell^3 \log^3 \ell),
	\)
	when the bound $\mathsf{M}(n) = O(n\log n)$ from \cite{harvey2021integer} is applied to describe the multiplication of $n$-bit integers, see also Subsection \ref{subsec:arithmetic-in-R}. 
\end{remark}

	\section{Implementation}

A proof-of-concept implementation of Algorithm \ref{algo:full} (with $\ell \equiv 3 \pmod{4}$) in SageMath \cite{sagemath} is available in our GitHub repository \cite{github-repo}. The repository further contains the different subroutines for computing with deformations of elliptic curves and isogenies presented in this paper. 

Our implementation works with elliptic curves in Montgomery form, and we use the available functions in SageMath to compute elliptic curve isogenies. The computation of $(2^n,2^n)$-isogenies is based on the formulas from \cite{kunzweiler2022efficient}, enhanced by explicit formulas for splitting and gluing isogenies. The individual $(2,2)$-isogenies in this framework are naturally represented by radical formulas which facilitates an efficient computation of the deformation of the isogeny chains. 

In the future, we plan to switch to the faster theta formulas
from \cite{DRfastthetadim2} to compute the $(2^n,2^n)$-isogenies and then use  radical $(2,2)$-isogenies in the theta model for computing the deformation of these isogenies.
However, to achieve a fast running time, we also require a dedicated implementation with a fast arithmetic for $R$ and fast polynomial multiplication over $R$, and that remains a future work.

A last remark is that we only require to compute $\ell$-isogenies in
dimension~$1$ for the initialisation step: the lifting step is done through $2^n$-isogenies in dimension~$2$.
We have seen that the initialisation step is not dominant, even if we use the Vélu formula rather than the sqrt-Vélu algorithm.
Still, we remark that one could also do the initialisation step using only
$2^n$-isogenies in dimension~$2$, using the Clapotis framework
\cite{DRclapotis} to convert reduced ideals of norm~$\ell$ to isogenies, as
is done in \cite{DRsqisign2d}. As a fun side effect, this would relax the
condition that $\ell \mid p^2 \pm 1$, and would allow to compute
$\phi_{\ell}$ while bypassing entirely Vélu's formulas for dimension~$1$
$\ell$-isogenies.

	\bibliographystyle{plain}
	\bibliography{ref.bib}
	
	\newpage
	\appendix
	\section{Computing $\varphi_\ell \in \FF_p[x,y]$, when $\ell \equiv 1 \pmod{4}$}
\label{sec:variants}

Here, we present Algorithm \ref{algo:modp_3}, a variant of Algorithm \ref{algo:modp}, for computing the modular polynomial $\varphi_\ell$ over $\FF_p$ for $p \in \P_\ell$ and $\ell \equiv 1 \pmod{4}$.  These modifications were outlined in Subsection \ref{subsec:modular-poly-b}. In particular, we obtain the following explicit version of Theorem \ref{thm:modular-polynomial-b}.

\begin{theorem}\label{thm:modular-polynomial-b-2}
	On input a prime  $\ell \equiv 1 \pmod{4}$ and a prime $p \in \mathcal{P}_\ell$, Algorithm \ref{algo:modp_3} returns the modular polynomial $\varphi_\ell$ over $\FF_p$. The algorithm runs in time $O\left(\log(p) + \ell^2\log^2(\ell) + n\,\ell^2\log(\ell)\right)$ over $\FF_{p^2}$.
\end{theorem}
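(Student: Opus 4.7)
The plan is to follow the proof of \cref{thm:modular-polynomial-a} essentially line by line, adjusting only for the bookkeeping forced by the auxiliary degree-$3$ isogeny. The algorithm to be analysed (\cref{algo:modp_3}) differs from \cref{algo:modp} in three places: (i) in the setup, the endomorphism $\gamma=[a]+[b]\iota$ now has norm $2^n-3\ell$ instead of $2^n-\ell$ (this is possible by the hypothesis $p\in\P_\ell$ combined with the definition $c_\ell=3$ for $\ell\equiv 1\pmod 4$), and an auxiliary rational $3$-isogeny $g\colon E\to C_0$ is precomputed; (ii) for each kernel $\langle P_k\rangle\subset E[\ell]$ we compute the codomain $E_k'$ as in \cref{algo:modp} but now build the extended diamond of \cref{eq:large-isogeny-diamond}, whose outer square has degrees $(3\ell,\,2^n-3\ell)$ and whose induced product isogeny $F\colon C_0\times E_k'\to C_1\times E_k$ has smooth degree $2^n$ with kernel as written in the proof of \cref{thm:modular-polynomial-b}; (iii) before invoking \texttt{lift\_isogeny\_diamond} we first lift $g$ to $\tilde g\colon \E\to\C_0$, and then call the routine on input $(C_0,E_k,C_1,E_k',K,\C_0)$ to obtain the lift $\E_k$ from which we read off $\tilde j_k=j(\E_k)$.

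For correctness, the key observation is that the diagram in \cref{eq:large-isogeny-diamond} is a genuine isogeny diamond in the sense of \cref{def:isogeny-diamond}, because the outer square commutes and has matching degrees on opposite sides. Hence Kani's lemma applies and $F$ is a well-defined $2^n$-isogeny of principally polarised abelian surfaces, with explicit kernel generated by the images of a basis of $C_1[2^n]$. By \cref{prop:lift-isogenies}, once we fix the lift $\C_0$ of $C_0$ (obtained from $\E$ via the lifted auxiliary isogeny $\tilde g$), there is a unique lift of the product isogeny, which in turn pins down unique lifts $\E_k,\C_1,\E_k'$ making the deformed diamond commute. In particular $\E_k$ is the unique deformation of $E_k$ such that $f_k$ lifts to $\tilde f_k\colon\E\to\E_k$, which is exactly what is needed to recover $\varphi_\ell(j(E)+\epsilon,Y)=\prod_k(Y-\tilde j_k)$ modulo $\epsilon^{\ell+2}$, and then (by reverting the substitution $\epsilon=X-j(E)$ and using $\deg_X\varphi_\ell=\ell+1$) the full modular polynomial.

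For the complexity, we simply revisit the four contributions identified in the proof of \cref{thm:modular-polynomial-a} and observe that the extra $3$-isogeny adds no asymptotic cost. The setup (torsion bases plus evaluation of $\iota$, $\gamma$, and $g$) costs $O(\log p+\log\ell)$ multiplications in $\FF_{p^2}$, exactly as before; the one-time lift $\tilde g$ of the degree-$3$ isogeny costs $O(1)$ multiplications in $R=\FF_{p^2}[\epsilon]/(\epsilon^{\ell+2})$, absorbed into the lifting bound. Computing the $\ell+1$ ground-field $\ell$-isogenies $f_k$ (and the associated $f_k'$, via the action of $\gamma$ on $P_k$) costs $O(\ell^2)$ field operations. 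Each call to \texttt{lift\_isogeny\_diamond} lifts a $(2^n,2^n)$-chain, and by the improved strategy of \cref{subsec:lift-smooth-isogeny} this costs $O(n)$ operations in $R$; summing over the $\ell+1$ kernels gives $O(n\ell)\cdot\mathsf{M}(R)$. The final product-tree evaluation of $\prod_k(Y-\tilde j_k)$ contributes $O(\log\ell)\cdot\mathsf{M}(R,\ell)$. Converting these into $\FF_{p^2}$-operations via $\mathsf{M}(R)=O(\ell\log\ell\log\log\ell)$ and $\mathsf{M}(R,\ell)=O(\ell^2\log\ell\log\log\ell)$ (Schönhage–Strassen for degree-$\ell$ polynomials over the degree-$(\ell+1)$ quotient) yields
\[
O(\log p + \ell^2) + O(n\ell^2\log\ell) + O(\ell^2\log^2\ell)
\]
$\FF_{p^2}$-operations (with a $\log\log$ factor suppressed), which is the claimed bound.

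The only mildly delicate point, and therefore the one I would write out with care, is checking that the \enquote{improved} lift-a-$(2^n,2^n)$-chain cost of $O(n)$ operations in $R$ per diamond still applies in the extended setting: the outer square has degree $2^n$ but the domain and codomain are now $C_0\times E_k'$ and $C_1\times E_k$, and in particular the $2$-isogeny chain starts from a curve obtained from $E$ by a $3$-isogeny. This is not an issue for the lifting loop of \cref{algo:lift_isogeny_diamond} as long as the $j$-invariants encountered along the chain avoid $\{0,1728\}$, which is ensured by the choice of base curve $y^2=x^3+6x^2+x$ and by $p>11$, exactly as in the proof of \cref{thm:modular-polynomial-a}; and the radical/theta formulas used inside the chain are oblivious to the source of the starting abelian surface.
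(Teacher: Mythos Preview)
Your proposal is correct and follows essentially the same approach as the paper: the paper's own proof of this theorem is the single sentence ``This coincides with the proof of \cref{thm:modular-polynomial-b}'', and you have simply spelled out that reference (and the underlying reduction to \cref{thm:modular-polynomial-a}) in detail, including the conversion of the $\mathsf{M}(R)$ and $\mathsf{M}(R,\ell)$ terms into $\FF_{p^2}$-operation counts.
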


\begin{proof}
	This coincides with the proof of Theorem \ref{thm:modular-polynomial-b}.
\end{proof}

\begin{algorithm}[h!]
	\caption{\texttt{modular\_polynomial\_modp} ~ $\ell \equiv 1 \pmod{4}$ }\label{algo:modp_3}
	\begin{flushleft}
		\textbf{Input:} A prime  $\ell \equiv 1 \pmod{4}$, and a prime $p\in \mathcal{P}_\ell$.\\
		\textbf{Output:} The modular polynomial $\varphi_\ell(X,Y) \in \FF_p[X,Y]$.
	\end{flushleft}
	\begin{algorithmic}[1]
		\linecomment{Setup}
		\State Set $E: y^2 = x^3 + 6x^2 + x$ over $\FF_{p^2}$ 
		\State $\iota \gets \iota \in \End(E)$ with $\iota \circ {\iota} = [-4]$
		\State $\gamma \gets [a] + [b] \iota$, where $2^n - 3\cdot \ell = a^2 + 4b^2$ for some $n$. \label{line:definition-n}
		\State Sample $P_3 \in E[3] \setminus \{\O\}$ \label{line:aux-iso1}
		\State $C_0 \gets E /\langle P_3 \rangle$, $C_1 \gets E/\langle \gamma(P_3)\rangle$
		\State $\gamma_0, g, g' \gets$ isogenies with $\gamma_0 \circ g = g' \circ \gamma$ as in Eq. \ref{eq:large-isogeny-diamond}
		\label{line:aux-iso2}
		\State Compute $P_{2^n}, Q_{2^n}$ with $C_1[2^n] = \langle P_{2^n}, Q_{2^n} \rangle$ \label{line:torsion-C1}
		\State Compute $P_\ell, Q_\ell$ with $E[\ell] = \langle P_\ell, Q_\ell \rangle$
		\State $\tilde{j} \gets j(E) + \epsilon \in \FF_{p^2}[\epsilon]/(\epsilon^{\ell+2})$
		\State $\E \gets $ elliptic curve with $j(\E) = \tilde{j}$
		\State $\C_0 \gets $ codomain of the lift $\tilde{g}: \E \to \C_0$ \label{line:lift-C0}
		\linecomment{computing and lifting all $\ell$-isogenies}
		\For{$k \gets 0, \dots, \ell$}
		\If{$k = \ell$}
		\State $P_k \gets Q_\ell$
		\Else
		\State $P_k \gets P_\ell + k \cdot Q_\ell$
		\EndIf
		\State $E_k \gets E/\langle P_k\rangle$ with $f_k: E \to E_k$
		\linecomment{constructing the $(3\ell,2^n-3\ell)$-isogeny diamond}
		\State $E_k' \gets E/\langle \gamma(P_k)\rangle$ with $f_k': E \to E_k'$
		\State $K \gets \langle (\hat{\gamma_0}(P_{2^n}), f_k'\circ \hat{g'}(P_{2^n})), (\hat{\gamma_0}(Q_{2^n}), f_k'\circ \hat{g'}(Q_{2^n})) \rangle$
		\State $(\C_0,\E_k,\C_1,\E_k') \gets $ \texttt{lift\_isogeny\_diamond}$(C_0,E_k,C_1,E_k',K, \C_0)$
		\State $\tilde{j_k} \gets j(\E_k)$
		\EndFor
		\linecomment{final step}
		\State $\varphi \gets \prod_{k=0}^{\ell}(Y - \tilde{j_k})(\epsilon = X - j(E)) \in \FF_p[X,Y]$
		\State \Return $\varphi$
	\end{algorithmic}
\end{algorithm}

\end{document}